\newtheorem{theorem}{Theorem}
\newtheorem{proposition}{Proposition}
\newtheorem {lemma}{Lemma}
\newtheorem {example}{Example}
\newtheorem {definition}{Definition}
\newtheorem{remark}{Remark}
\newtheorem{hypothesis}[theorem]{Hypothesis}
\newenvironment{system}
{\left\lbrace\begin{array}{@{}l@{}}}
{\end{array}\right.}
\def\ds{\begin{displaystyle}}
\def\eds{\end{displaystyle}}
\def\dis{\displaystyle }
\def\<{\langle }
\def\>{\rangle }
\newcommand{\R}{\mathbb{R}}
\newcommand{\E}{\mathbb{E}}
\newcommand{\mP}{\mathbb{P}}
\newcommand{\Fcal}{\mathcal{F}}     
\newcommand{\Lcal}{\mathcal{L}}
\newcommand{\calm}{\mathcal{M}}
\newcommand{\calg}{\mathcal{G}}
\newcommand{\calb}{\mathcal{B}}
\newcommand{\calu}{\mathcal{U}}
\def\R{\mathbb R}
\def\E{\mathbb E}
\def\P{\mathbb P}
\title[SMP for SPDEs with delay]{Stochastic maximum principle for SPDEs with delay}
\subjclass{93E20, 60H15, 60H30}
 \keywords{stochastic maximum principle, stochastic delay differential equation , anticipated backward stochastic differential equations, infinite dimensions.}
\thanks{Financial support from the grant
MIUR-PRIN 2010-11 ``Evolution differential problems: deterministic
and stochastic approaches and their interactions''
is gratefully acknowledged. The authors have been
supported by the Gruppo Nazionale per l'Analisi Matematica,
la Probabilit\`a e le loro Applicazioni (GNAMPA)
of the Istituto Nazionale di Alta Matematica (INdAM).}
\begin{document}

\author{Giuseppina Guatteri}
\address[G. Guatteri]{Dipartimento di Matematica, Politecnico di Milano. via Bonardi 9, 20133 Milano, Italia}
\email{giuseppina.guatteri@polimi.it}

\author{Federica Masiero}
\address[F. Masiero]{Dipartimento di Matematica e Applicazioni, Universit\`a di Milano-Bicocca. via Cozzi 55, 20125 Milano, Italia}
\email{federica.masiero@unimib.it}

\author{Carlo Orrieri}
\address[C. Orrieri]{Dipartimento di Matematica, Universit\`a di Pavia. via Ferrata 1, 27100 Pavia, Italia}
\email{carlo.orrieri01@ateneopv.it}

\begin{abstract}
In this paper we develop necessary conditions for optimality, in the form of the Pontryagin maximum principle, for the optimal control problem of a
class of infinite dimensional evolution equations with delay in the state. In the cost functional 
we allow the final cost to depend on the history of the state. To treat such kind of cost functionals
we introduce a new form of anticipated backward stochastic differential equations which plays the role of dual equation associated to the 
control problem. 
\end{abstract}

\maketitle

\section{Introduction}

In this paper we study the stochastic maximum principle (SMP) for controlled stochastic partial differential equations with delay
in the state. Maximum principle for evolution equations have been proved in \cite{hu1990maximum}, see also
the recent papers \cite{fuhrman2012stochastic}, \cite{fuhrman2013stochastic}, \cite{du2013maximum}, and 
\cite{fuhrman2015stochastic} for the Banach space case.
For what concerns maximum principle for delay equations, after the introduction of the anticipated
backward stochastic differential equations (ABSDEs) in the paper \cite{PengYang}, a wide literature have developed.
We mention, among others, \cite{ChenWuAutomatica2010}
where a problem with pointwise delay in the state and in the control is studied, \cite{oksendal2011optimal}
where a controlled state equation driven by a Brownian motion and by a Poisson random measure is taken into
account, \cite{ChenWuYu2012} where the linear quadratic
case is considered.

\noindent In \cite{MengShenAMO2015} the authors study the stochastic maximum principle for delay evolution equations: the state equation is for some aspects
more general than ours, on the contrary, as in the whole recent literature cited above, the final cost does not depend on the past: in
view of applications this is a strong restriction. 

In the present paper we study the stochastic maximum principle for stochastic control problems where the state equation
is an evolution with delay in the state and where in the cost functional associated 
we allow dependence on the past trajectory also in the final cost; we formulate the maximum principle by means of an adjoint equation
which turns out to be an ABSDE of a new form.

As we just said the recent literature based on ABSDEs does not take into account the case when the final cost
depends on the past trajectory of the state. As far as we know, such a general cost is studied only in \cite{hu1996maximum} for
finite dimensional systems, where the adjoint equation is solved directly by the authors, 
without using the theory of 
ABSDEs. Towards \cite{hu1996maximum}, the novelty of the present paper is that we are able to treat infinite dimensional stochastic control problems.

We also mention that stochastic control problems with delay in the state can also be treated  by the dynamic programming principle and the 
solution of the related Hamilton-Jacobi-Bellmann equation, as it can be done for problems without delay.
For stochastic control problems with delay this approach is carried out in \cite{fuhrman2010stochastic} in
the finite dimensional case and in \cite{zhou2012existence} in the infinite dimensional case.
Both in these two cases the problem is not reformulated in a product space given by the present times the past trajectory,
but it is directly addressed, as we propose here formulating the stochastic maximum principle by means of an ABSDE.\\ 
Comparing the two approaches, we notice that by the stochastic maximum principle we are able to treat state equations 
(see \eqref{eq:state:intro} below) more general than the state equations 
 treated in \cite{fuhrman2010stochastic} and \cite{zhou2012existence}: in the aforementioned papers the state equation has
 to satisfy the structure condition, meaning that the control affects the system only through the noise.
Nevertheless, by the Pontryagin stochastic maximum principle we can only give necessary conditions for optimality for a 
 state equation like (\ref{eq:state:intro}).
On the other side in \cite{fuhrman2010stochastic}
 and in \cite{zhou2012existence}, the authors arrive at more general results, being able to characterize the 
 value functions and the feedback law of the optimal controls.

Before going into details of the matter of the paper, let us present a concrete case we can deal with.
Consider a controlled stochastic heat equation in one dimension,
with Neumann boundary conditions, and with delay in the state:
\begin{equation}\label{heat-eq-contr-intro}
 \left\{
  \begin{array}{l}
  \dis
\frac{ \partial y}{\partial t}(t,\xi)= \Delta y(t,\xi)+\dis \int_{-d}^0 \tilde f(t,y(t+\theta, \xi),u(t,\xi))\mu_f(d\theta) dt 
+\dis\tilde  g(t,\xi)\dot W(t,\xi), \quad t\in [0,T],\;
\xi\in [0,1],
\\\dis
y(\theta,\xi)=x(\theta,\xi),\quad \theta \in [-d,0]
\\\dis
 \dfrac{\partial}{\partial\xi}y(t,\xi)=0, \quad \xi=0,1, \; t\in [0,T].
\end{array}
\right.
\end{equation}
Here $\dot{W}(t,\xi)$ is a space time white noise and $u:\Omega \times [0,T]\times[0,1]\rightarrow \R$ is a control
process such that $u\in L^2(\Omega \times [0,T],L^2([0,1]))$. The maximum delay is given by $d >0$ and $\mu_f$ is a
regular measure on the interval $[-d,0]$. The dependence on the past is given in the drift by
\[
  \int_{-d}^0 \tilde f(t,y(t+\theta,\xi),u(t,\xi))\mu_f(d\theta) , 
\]
so it is of integral type, but with respect to a general regular measure $\mu_f$.
The objective is to minimize a cost functional of the form
\begin{equation}\label{eq:cost:heat:intro}
J(u(\cdot)) = \E \int_0^T \int_{-d}^0 \int_0^1 \tilde l(t,\xi,y(t+\theta), u(t,\xi))d\xi\mu_l(d\theta)dt 
+ \E \int_{T-d}^T \int_0^1 \tilde h(y( \theta,\xi))d\xi\mu_h(d\theta),
\end{equation}
where  $\mu_l$ and $\mu_h$ are regular measures on $[-d,0]$ and $[T-d,T]$, respectively.\\
Equation (\ref{heat-eq-contr-intro}) can be reformulated as an evolution equation in the Hilbert space $H=L^2([0,1])$,
see Section \ref{sez-appl} for more details, and it will be of the form of the following abstract evolution equation we study in the paper.

Namely in a real and separable Hilbert space $H$ we consider the following controlled state equation:
\begin{equation}\label{eq:state:intro}
\begin{system}
dX(t) = \left[ AX(t) + \dis\int_{-d}^0 f(t,X(t+\theta),u(t))\mu_f(d\theta) \right] dt 
+ g(t)dW(t), \\
X_0(\theta) = x(\theta), \qquad \theta \in [-d, 0].
\end{system}
\end{equation}
We assume that $A$ is the generator of a strongly continuous semigroup in $H$, $W$ is a cylindrical Wiener process in another real
and separable Hilbert space $K$ and $f:[0,T]\times H\times U\rightarrow H$ is a Lipschitz
continuous map with respect to the state $X$, uniformly
with respect to the control $u$. 
We refer to Section \ref{sez-appl} for concrete examples of stochastic partial differential equations we can treat.

The control problem associated to (\ref{eq:state:intro}) is to minimize the cost functional
\begin{equation}\label{eq:cost:intro}
J(u(\cdot)) = \E \int_0^T \int_{-d}^0 l(t,X(t+\theta), u(t))\mu_l(d\theta)dt + \E \int_{T-d}^T h(X( \theta))\mu_h(d\theta).
\end{equation}
Both in the state equation and in the cost functional the dependence on the past trajectory is forced to be of
integral type with respect to a general regular measure. The novelty of introducing the dependence on the past trajectory in 
the final cost leads to an additional term in the adjoint equation, namely the adjoint equation, in its mild formulation, is given by
\begin{equation}\label{ABSDEtrascinataMild:intro}
\begin{split}
&p(t) = \int_{t \vee (T-d)}^T\!\! S\left(s-t\right) ' D_x h(X(s)) \mu_h(ds) \\
&\qquad + \int_t^T S(s-t)' \, \E^{\Fcal_s}\!\int_{-d}^0 D_xf(s,X(s),u(s-\theta))'p(s-\theta)\mu_f(d\theta)\, ds \\ 
&\qquad + \int_t^T S(s-t)'\, \E^{\Fcal_s}\!\int_{-d}^0 D_xl(s,X(s+\theta),u(s))\mu_l(d\theta)ds - \int_t^T S(t-s)'q(s)\, dW(s),\\
&p(T-\theta)=0, \;q(T-\theta)=0,\quad \text{ a.e. }\theta \in [-d,0),\, \P-\text{a.s.}.
\end{split}
\end{equation}
If we consider a general regular measure $\mu_h$, not necessarily absolute continuous with respect to the Lebesgue measure, 
the differential form of equation \eqref{ABSDEtrascinataMild:intro}
does not make sense, since the term 
\[
 \frac{d}{dt} \left[
\int_{t \vee (T-d)}^T\!\! S\left(s-t\right)' D_x h(X(s)) \mu_h(ds) \right]
 \]
is not well defined, while the integral form (\ref{ABSDEtrascinataMild:intro}) makes sense: this allows to give the notion of mild solution 
also in this case. In order to be able to work with differentials, 
we will consider an ABSDE where $\mu_h$ is approximated by a sequence of regular measures $(\mu_h^n)_{n\geq 1}$ absolute continuous with 
respect to the Lebesgue measure on $[T-d,T]$, so that once we have also applied the resolvent
operator of $A$, the differential form of this approximating ABSDE makes sense.
Having in hands these tools, we are able to state necessary conditions for the optimality: if
$(\bar{X},\bar{u})$ is an optimal pair for the control problem, given a pair of processes
$$(p,q) \in L^2_\mathcal{F}(\Omega\times [0,T+d];H)\times L^2_\mathcal{F}(\Omega\times [0,T+d];\mathcal{L}_2(K;H))$$ which are 
solution  to the ABSDE \eqref{ABSDEtrascinataMild:intro}, the following variational inequality holds
\begin{equation*}
\<\frac{\partial}{\partial u}\mathcal{H}(t,\bar{X}_t,\bar{u}(t), p(t)), w - \bar{u}(t)\> \geq 0.
\end{equation*}
Here $\mathcal{H}:[0,T]\times C([-d,0],H) \times U \times H \to \R$ is the so called Hamiltonian function and it is given by
\begin{equation}\label{eq:hamiltonian:intro}
\mathcal{H}(t,x,u,p) = \<F(t,x,u),p\>_H
+  L(t,x,u), 
\end{equation}
where $F: C([-d,0],H) \times U\rightarrow H$ and $L:[0,T]\times C([-d,0],H) \times U\rightarrow H$ are defined by
\[
 F(x,u):=\int_{-d}^0 f(x(\theta),u)\mu_f(d\theta) ,\;\;
L(t,x,u):= \int_{-d}^0 l(t,x(\theta),u)\mu_l(d\theta).
 \]

The paper is organized as follows. In Section 2 we present the problem and state our assumptions. Moreover we give some examples of models
we can treat with our techniques. Section 3 is devoted to the study of the state equation and of the related first variation process. 
In Section 4 we introduce the new form of ABSDE which is the essential tool we need to formulate the stochastic maximum principle, and finally
in Section 5 we state and prove the stochastic maximum principle.

\section{Assumptions and preliminaries}\label{sec:AssPrel}

Throughout the paper, we denote by $H$ and $K$ two real and  separable Hilbert spaces with inner product $\<\cdot,\cdot\>_H$ and 
$\<\cdot,\cdot\>_K$ respectively (if no confusion is possible we will use
$\<\cdot,\cdot\>$). We denote by $\mathcal{L}(K; H)$  and $\mathcal{L}_2(K;H)$ the space of linear bounded operators and the Hilbert space of Hilbert-Schmidt operators from $K$ to $H$, respectively.

\noindent Let $(\Omega, \Fcal, \P)$ be a complete probability space
and let $W(t)$ be a cylindrical Wiener process with values in $K$. We endow $(\Omega, \Fcal, \P)$ with the natural filtration $(\mathcal{F}_t)_{t\geq 0}$ generated by $W$ and
augmented in the usual way with the family of $\mP$-null sets of $\mathcal{F}$.
For any $p \geq 1$ and $T>0$ we define
\begin{itemize}
\item $L^p_{\mathcal{F}}(\Omega\times [0,T]; H)$, the set of all $(\mathcal{F}_t)_{t\geq 0}$-progressively measurable processes with values in $H$ such that
\[ \|X\|_{L_{\mathcal{F}}^p(\Omega\times[0,T];H)} = \left( \E \int_0^T |X(t)|_{H}^p dt\right)^{1/p} < \infty; \]
\item $L^p_{\mathcal{F}}(\Omega; C([0,T];H))$, the set of all $(\mathcal{F}_t)_{t\geq 0}$-progressively measurable processes with values in $H$ such that
\[ \|X\|_{L^p_{\mathcal{F}}(\Omega; C([0,T];H))} = \Big( \E \sup_{t \in [0,T]} |X(t)|_{H}^p \Big)^{1/p} < \infty. \]
\end{itemize}

\subsection{Formulation of the control problem}

Let $U$ be another separable Hilbert space and we denote by $U_c$ a convex non empty subspace of $U$.
By admissible control we mean a $(\Fcal_t)_{t\geq 0}$-progressively measurable process with values in $U_c$ such that
\begin{equation}\label{eq:admissible_control}
\E \int_0^T |u(t)|_U^2 dt < \infty.
\end{equation}
We denote by $\calu_{ad}$ the space of admissible controls.
The present paper is concerned with the study of the following controlled evolution equation with delay in $H$

\begin{equation}\label{eq:state}
\begin{system}
dX(t) = \left[ AX(t) +\dis \int_{-d}^0 f(t,X_t(\theta),u(t))\mu_f(d\theta) \right] dt +
g(t)dW(t), \\
X_0 = x(\theta), \qquad \theta \in [-d, 0]
\end{system}
\end{equation}
where $\mu_f$ is a regular measure on $[-d,0]$ with finite total variation, see e.g. \cite{eidelman2004functional}, paragraph 9.9. Moreover $f: [0,T]\times H \times U \to H$, $g: [0,T] \to \Lcal(K;H)$ and $x(\cdot)$ is a given path. 
By $X_t$ we mean the
past trajectory up to time $t-d$, namely
\begin{equation}\label{defdiX_t}
    X_t (\theta)=X(t +\theta),\qquad
\theta \in[-d,0].
\end{equation}
Let us denote by $E:= C([-d,0],H)$ the space of continuous functions from $[-d,0]$ to $H$, endowed with the supremum norm.
It turns out that there exists a continuous solution to \eqref{eq:state}, so we can define
an $E$-valued process $X=(X_{t })_{t \in[0,T]}$ by formula \eqref{defdiX_t}.	
 
Associated to the controlled state equation (\ref{eq:state}) we consider a cost functional of the following form:
\begin{equation}\label{eq:cost}
J(u(\cdot)) = \E \int_0^T \int_{-d}^0 l(t,X_t(\theta), u(t))\mu_l(d\theta)dt + \E \int_{T-d}^T h(X( \theta))\mu_h(d\theta),
\end{equation}
where we stress the fact that the final cost depends on the history of the process. 
Here $l: [0,T] \times H \times U \to \R$, $h: H \to \R$ and $\mu_l,\mu_h$  are real valued regular measures.

The goal is to minimize the cost functional overall admissible controls. We say that a control $\bar{u}$ is optimal if 
\begin{equation}
J(\bar{u}) = \inf_{u(\cdot) \in \mathcal{U}} J(u(\cdot)).
\end{equation}

\begin{example}
Here we give some examples of measures $\mu_f$ and $\mu_l$ we can treat.
\begin{itemize}
\item Linear combinations of Dirac functions,
that is there exist $k_1,...,k_n\in[-d,0]$
such that for every $A \in \calb([-d,0])$ 
\[
 \mu(A):=\sum_{i=1}^n\alpha_i\delta_{k_i}(A), \; k_i\in[-d,0],\, i=1,...,n.
\]
\item Discrete measures with infinite support, as an example we mention the probability measure on $[-d,0]$
given by
\[
 \mu(A):=\sum_{i=1}^\infty 2^{-i}\delta_{k_i}(A),\quad k_i=\frac{-d}{i},\, i\geq 1.
\]
\item Measures which are absolute continuous with respect to the Lebesgue measure on $[-d,0]$, that is there exists 
$w\in L^1([-d,0])$ such that
\[
 \mu(A)=\int_{-d}^0 w(\theta)d\theta
 \]
\end{itemize}
If in the previous points we replace $[-d,0]$ with $[T-d,T]$ we obtain examples of measures $\mu_h$ we can treat.
 \end{example}
 
 We notice that if the measure $\mu_h$ associated to the final cost is the sum of a measure which is absolutely continuous
with respect to the Lebesgue measure in $[T-d,T]$ and of the dirac measure at $T$, that is
\[
 \mu_h(A)=\int_Aw(\theta)d\theta+\delta_{T}(A),
\]
then the cost (\ref{eq:cost}) can be rewritten as
\begin{align*}
J(u(\cdot))& = \E \int_0^T \int_{-d}^0 l(t,X_t(\theta), u(t))\mu_l(d\theta)dt + \E \int_{T-d}^T h(X( \theta))w(\theta)d\theta+\E h(X(T))\\ \nonumber
& = \E \int_0^T \left[\int_{-d}^0 l(t,X_t(\theta), u(t))\mu_l(d\theta)+1_{[T-d,T]}(t)h(X(t))w(t)\right]dt+\E h(X(T)).
\end{align*}
So in this case the cost  can be transformed into a new running cost and a ``standard'' final cost not depending on the past.
The adjoint equation turns out to be a ``standard'' ABSDE 
\begin{equation*}
\begin{split}
&p(t) = \int_{t\vee (T-d)}^T  D_x h(X(s)) w(s)ds \\
&\qquad + \int_t^T S(s-t)' \, \E^{\Fcal_s}\!\int_{-d}^0 D_xf(s-\theta,X(s),u(s-\theta))'p(s-\theta)\mu_f(d\theta)\, ds \\ 
&\qquad + \int_t^T S(s-t)'\, \E^{\Fcal_s}\!\int_{-d}^0 D_xl(s, X(s+\theta),u(s))\mu_l(d\theta)ds - \int_t^T S(t-s)'q(s)\, dW(s),\\
&p(T)= D_x h(X(T)),\;p(T-\theta)=0, \;q(T-\theta)=0,\quad \text{ a.e. } \theta \in [-d,0),\, \P-\text{a.s. }
\end{split}
\end{equation*}
This equation can be also written in differential form as
\begin{equation*}
\begin{split}
&-dp(t) =A'p(t)dt+1_{(T-d,T)}(t) D_x h(X(t)) w(t)dt+  \E^{\Fcal_t}\!\int_{-d}^0 D_xl(X(t+\theta),u(t))\mu_l(d\theta)dt \\
&\qquad \qquad + \E^{\Fcal_t}\!\int_{-d}^0 D_xf(X(t),u(t-\theta))'p(t-\theta)\mu_f(d\theta)\, dt-q(t)\, dW(t), \\ 
&p(T)= D_x h(X(T)),\;p(T-\theta)=0, \;q(T-\theta)=0,\quad \text{ a.e. } \theta \in [-d,0),\, \P\text{ a.s. }
\end{split}
\end{equation*}
This case is a special and simpler case towards the very general final cost we can treat, and for which we are not able
to write directly the adjoint equation in differential form.

\medskip

\begin{remark}\label{remark:L^2}
 We notice that with the approach we present in this paper we can treat costs which are well defined in the space of continuous
 functions $E=C([-d,0],H)$ but not in the space of square integrable functions $L^2([-d,0],H)$, this is the case if the final cost
 is defined by means of a measure $\mu_h$ given e.g. by
 \[
  \mu_h(A):=\sum_{i=1}^\infty 2^{-i}\delta_{k_i}(A),\quad k_i=T-\frac{d}{i},\, i\geq 1,\;A \in \calb([T-d,T]).
 \]
Therefore, the direct approach we propose allows to treat more general functions than the one based on the reformulation of the problem
in the product space $H\times L^2([-d,0],H)$,
which follows the lines of what is done for finite dimensional control problems, see e.g. \cite{da1992stochastic} and references therein.
\end{remark}

\medskip

We now state our assumptions on the coefficients of equation \eqref{eq:state}. In the following, given three real and separable
Hilbert spaces $H_1$, $H_2$ and $H_3$, we denote by $\calg^1(H_1,H_2)$ the class of mappings $F:H_1\rightarrow H_2$ such that
$F$ is continuous, G\^ateaux differentiable on $H_1$ and $DF: H_1\rightarrow \Lcal(H_1,H_2)$ is strongly continuous.
If $H_2=\R$, we write $\calg^1(H_1)$ for $\calg^1(H_1,\R)$. We denote by
$\calg^{0,1}([0,T]\times H_1,H_2)$ the class of continuous functions, which are  G\^ateaux differentiable with respect to $x\in H_1$,
and such that $D_xF:[0,T]\times H_1\rightarrow H_2$ is strongly continuous. We also use the immediate generalization to the class
$\calg^{0,1,1}([0,T]\times H_1\times H_3,H_2)$
We refer to \cite{fuhrman2002nonlinear}, Section 2.2, for more details on the classes $\calg^1$ and $\calg^{0,1}$.

\begin{hypothesis}\label{ipotesi}
On the coefficients of the controlled state equation we assume that
\begin{itemize}
\item[(H0)] the measures $\mu_f$ and $\mu_l$ are finite regular measures on $[-d,0]$ and $\mu_h$ is a finite regular measure
on $[T-d,T]$;
\item[(H1)] the operator $A: D(A) \subset H \to H$ is the infinitesimal generator of a $C_0$-semigroup $S(t)$ in $H$ and there
exist $M>0,\, \omega >0$ such that
\[
 \vert S(t)\vert_{\mathcal{L}(H)}\leq M e^{\omega t};
\]
\item[(H2)]  the map $f: [0,T]\times H \times U \to H$ is measurable, continuous with respect to $x$ and $u$,
and it is Lipschitz continuous with respect to $x$, uniformly with respect to $t\in[0,T]$ and to $u\in U_c$:
$\forall x,\,y\in H$
there exists $C_1>0$ such that
\[
 \vert f(t,x,u)-f(t,y,u)\vert \leq C_1\vert x-y\vert, \quad t\in[0,T],\,u\in U_c;
\]
and moreover for all $u\in U_c$, 
$\forall x\in H$ we assume that
\[
 \vert f(t,x,u)\vert \leq C_1\left(1+\vert x\vert+\vert u\vert\right), \qquad \; t\in[0,T],\,u\in U_c;
\]
\item[(H3)] the map $f$ is G\^ateaux differentiable in $x$ and $u$, moreover $f\in\calg^{0,1,1}([0,T]\times H\times U_c, H)$,
and due to the Lipschitz property of $f$ with respect to $x$ we get that
$D_xf$ is bounded, 
\[
\|D_xf(t,x,u)\|_{\Lcal(H)} \leq C_1,  \quad \; \forall \,t\in[0,T],\,x\in H,\,u\in U_c,
\]
moreover we assume that there exists a constant $C_2>0$ such that
\[\|D_uf(t,x,u)\|_{\Lcal(U;H)} \leq C_2, \quad \; \forall \,t\in[0,T],\,x\in H,\,u\in U_c;
\]
\item[(H4)] the map $g:[0,T]\rightarrow \mathcal{L}(K,H)$ is such that $\forall\, k\in K$
the map $gk:[0,T] \rightarrow H$ is measurable; for every
$s>0,\,t\in[0,T]$ $e^{sA}g(t)\in \Lcal_2(K,H)$ and
the following estimate holds true
 \[
  \vert e^{sA}g(t)  \vert_{ \Lcal_2(K,H)}\leq C s^{-\gamma},\\ \nonumber
  \]
for some constant $C>0$ and $0\leq\gamma< \dfrac{1}{2}.$
\item[(H5)] the map $l: [0,T] \times H \times U \to \R$ belongs to $\calg^{0,1,1}([0,T]\times H\times U_c)$ and there exists $j>0$ and
a constant $C_3>0$ such that
\[
|D_xl(t,x,u)|_H+|D_ul(t,x,u)|_{U} \leq C_3(1 + |x|^j_H + |u|_{U});\]
\item[(H6)] the map $h: H \to \R $ belongs to $\calg^1(H)$
and there exists $k>0$ and
a constant $C_4>0$ such that
\[ |D_x h(x)|_H \leq C_4(1 + |x|^k_H) \]
\end{itemize}
\end{hypothesis}

In the next Section 
 we collect
some results on existence and
uniqueness of a solution to equation (\ref{eq:state}) and on its
regular dependence on the initial condition.

Now we notice that the drift of the state equation (\ref{eq:state}) can be also rewritten
as a linear functional in $E$, and this linear functional enjoys some regularity inherited by Hypothesis
\ref{ipotesi} on the coefficients.
\begin{remark}\label{r:abstract} 
Associated to the coefficient $f$ of the state equation \eqref{eq:state}, we can define a map
$F:[0,T]\times E \times U \to H$ defined by, $\forall\,t\in[0,T],\, x\in E,\, u\in U $,
\begin{equation}\label{FG}
F(t,x,u) := \int_{-d}^0 f(t,x(\theta),u)\mu_f(d\theta).
\end{equation}
The state equation can be rewritten as 
\begin{equation}\label{eq.state:E}
\begin{system}
dX(t) = \left[ AX(t) + F(t,X_t,u(t))\right] dt + g(t)dW(t), \\
X(0) = x(\theta), \qquad \theta \in [-d, 0]
\end{system}
\end{equation}
The same can be done for the cost functional: we can define two maps
$L:[0,T]\times E \times U \to \R$ and $H: E  \to \R$ defined by, $\forall\,t\in[0,T],\, x\in E,\, u\in U $,
\begin{equation}\label{LH}
L(t,x,u) := \int_{-d}^0 l(t,x(\theta),u)\mu_l(d\theta), \qquad H(x) := \int_{T-d}^T h(x(\theta))\mu_h(d\theta)
= \int_{-d}^0 h(x(\theta))\mu^T_h(d\theta),
\end{equation}
where in the last passage we have defined $\mu^T_h$ as the measure on $[-d,0]$ given by
\[
 \mu^T_h(A):=\mu_h(A+T),\;\forall A \in \calb([-d,0]).
\]
The cost functional can be rewritten as
\begin{equation}
\label{cost:E}
J(u(\cdot)) = \E \int_0^T L(t,X_t, u(t)) dt+\E H(X_T).
\end{equation}
\end{remark}
In the following Lemma we show that, under Hypothesis \ref{ipotesi} on $f,\,l,\,h$, the maps
$F,\,L,\,H$ defined in (\ref{FG}) and in (\ref{LH}) satisfy continuity and differentiability
property with respect to the trajectory, which belongs to the space $E$.
\begin{lemma}\label{lemma:regFGLH} Let Hypothesis \ref{ipotesi} holds true and let $F$ be defined by \eqref{FG} and $L,\,H$
be defined by \eqref{LH}. Then $F,\, L,\, H$ turn out to be continuous mappings, moreover
$F\in \calg^{0,1,1}([0,T] \times E\times U_c, H)$,
$L\in \calg^{0,1,1}([0,T]\times E\times U_c)$ and $H\in \calg^1(E)$
\end{lemma}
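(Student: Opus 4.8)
The plan is to handle $F$, $L$ and $H$ in parallel, since each is obtained by composing a coefficient satisfying Hypothesis \ref{ipotesi} (namely $f$, $l$ or $h$) with the evaluation maps $\pi_\theta\colon x\mapsto x(\theta)$ and then integrating against a finite regular measure. The two structural facts I would use repeatedly are that convergence $x_n\to x$ in $E=C([-d,0],H)$ is uniform on the compact set $[-d,0]$, so that $\sup_\theta|x_n(\theta)-x(\theta)|_H\to 0$ and in particular each $x\in E$ satisfies $|x(\theta)|_H\le\|x\|_E$ for all $\theta$; and that by (H0) the measures have finite total variation. With these in hand every limit will be justified by dominated convergence against $|\mu_f|$, $\mu_l$ or $\mu_h$.

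For continuity, take $(t_n,x_n,u_n)\to(t,x,u)$ in $[0,T]\times E\times U_c$. By joint continuity of $f$ and the uniform convergence $x_n(\theta)\to x(\theta)$, the integrand $f(t_n,x_n(\theta),u_n)$ converges to $f(t,x(\theta),u)$ for every $\theta$; the linear growth bound in (H2), evaluated along the (bounded) convergent sequences, provides a $|\mu_f|$-integrable dominating function, and dominated convergence gives $F(t_n,x_n,u_n)\to F(t,x,u)$ in $H$. The same scheme applies to $L$ and $H$, using the polynomial growth bounds (H5), (H6) together with the uniform bound on $\sup_n\|x_n\|_E$.

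For G\^ateaux differentiability I would propose the candidate derivatives
$$D_xF(t,x,u)k=\int_{-d}^0 D_xf(t,x(\theta),u)\,k(\theta)\,\mu_f(d\theta),\qquad D_uF(t,x,u)v=\int_{-d}^0 D_uf(t,x(\theta),u)\,v\,\mu_f(d\theta),$$
and the analogous formulas for $L$ and $H$. Each is a bounded linear operator: for $F$ the bound $\|D_xf\|_{\Lcal(H)}\le C_1$ of (H3) yields $|D_xF(t,x,u)k|_H\le C_1\,|\mu_f|([-d,0])\,\|k\|_E$, and similarly with $C_2$ in the control direction; for $L$ and $H$ one uses (H5), (H6) and the boundedness of $|x(\theta)|_H$ on $[-d,0]$. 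To identify the derivative I would write the incremental quotient via the fundamental theorem of calculus, e.g.
$$\frac{f(t,x(\theta)+\varepsilon k(\theta),u)-f(t,x(\theta),u)}{\varepsilon}=\int_0^1 D_xf(t,x(\theta)+r\varepsilon k(\theta),u)\,k(\theta)\,dr,$$
let $\varepsilon\to0$ using strong continuity of $D_xf$, and integrate against $\mu_f$; the uniform bound $C_1$ again dominates, so the limit passes through the integral and yields $D_xF(t,x,u)k$.

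Finally, strong continuity of the derivatives---which is exactly what the classes $\calg^{0,1,1}$ and $\calg^1$ require---amounts, for a fixed direction $k$, to continuity of $(t,x,u)\mapsto D_xF(t,x,u)k$ in $H$, and this is obtained by the same dominated-convergence argument as for continuity of $F$, now applied to the integrand $D_xf(t,x(\theta),u)k(\theta)$ and using the strong continuity of $D_xf$ from (H3). The main obstacle I anticipate is bookkeeping rather than conceptual: one must secure, at each step, a single dominating function integrable against the (possibly non-absolutely-continuous) measure, which for $L$ and $H$ requires controlling the polynomial terms $|x(\theta)|^j$, $|x(\theta)|^k$ uniformly in $\theta$ along convergent sequences in $E$---this is precisely where the continuous-path setting $E=C([-d,0],H)$, rather than $L^2([-d,0],H)$, is used (cf. Remark \ref{remark:L^2}).
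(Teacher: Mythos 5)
Your proposal is correct and follows essentially the same route as the paper, whose proof is only a sketch of one case: it computes the difference quotient for $D_xF$ in the $x$-direction and passes the limit through the $\mu_f$-integral using the boundedness of $D_xf$, asserting that the remaining claims (continuity, the $u$-derivative, and the statements for $L$ and $H$) ``follow in the same way''---precisely the dominated-convergence bookkeeping you carry out explicitly via the fundamental-theorem representation and the finite total variation of the measures. The only point worth tightening is your continuity argument for $L$: Hypothesis (H5) bounds $D_xl$ and $D_ul$ but not $l$ itself, so the dominating function there should be obtained from the derivative bounds via the mean value theorem (or from continuity of $l$ on the compact set swept out by the convergent sequences), a one-line fix that does not affect the argument.
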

\begin{proof}
Let us briefly show, for the reader's convenience, one of the previous implications. Suppose that $f$ satisfies Hypothesis 1, then the map 
$F(t,x,u) := \dis\int_{-d}^0 f(t,x(\theta),u) \mu_f(d\theta)$ is Gateaux differentiable
as functions of $x$, that is as a function from $E:= C([-d,0],H)$ to $H$. Indeed, if $h \in E$ we have
\begin{equation}
\begin{split}
D_xF(t,x,u)[h]  &:= \lim_{\varepsilon \to 0} \frac{F(t,x + \varepsilon h,u) - F(t,x,u)}{\varepsilon} \\
&= \frac{1}{\varepsilon} \int_{-d}^0 \left[ f(t,x(\theta) + \varepsilon h(\theta),u) - f(t,x(\theta),u) \right]
\mu_f(d\theta) \\
&= \int_{-d}^0 D_xf(t,x(\theta),u)h(\theta)\mu_f(d\theta)=\<D_xf(t,x,u)\mu_f,h \>_{E^\prime,E}
\end{split}
\end{equation}  
where we used the Gateaux differentiability of the map $f$. In particular, $\| D_F(t,x,u)\|_{\mathcal{L}(E,H)} \leq C$. The other claims follow in the same way.
\end{proof}

\subsection{Some controlled equations we can treat}
\label{sez-appl}

We present here some equations with delay that we can treat with our techniques.

\noindent A first class of models include reaction diffusion equations, such as the stochastic heat equation with
Neumann boundary conditions (\ref{heat-eq-contr-intro}), already mentioned in the Introduction.

Here we present a controlled stochastic heat equation in one dimension,
with Dirichlet boundary conditions, and with delay in the state:
\begin{equation}\label{heat-eq-contr}
 \left\{
  \begin{array}{l}
  \dis
\frac{ \partial y}{\partial t}(t,\xi)= \Delta y(t,\xi)
+\dis \int_{-d}^0 \tilde f(\xi,y(t+\theta, \xi),u(t,\xi))\mu_f(d\theta) dt 
+\dis\tilde  g(t,\xi)\dot{W}(t,\xi), \quad t\in [0,T],\;
\xi\in [0,1],
\\\dis
y(\theta,\xi)=x(\theta,\xi),\quad \theta \in [-d,0],
\\\dis
 y(t,\xi)=0, \quad \xi=0,1, \; t\in [0,T].
\end{array}
\right.
\end{equation}
The process $\dot{W}(t,\xi)$ is a space time white noise and $u:\Omega \times [0,T]\times[0,1]\rightarrow \R$ is the control
process such that $u\in L^2(\Omega \times [0,T],L^2([0,1]))$. The maximum delay is given by $d>0$ and $\mu_f$ is a
regular measure on the interval $[-d,0]$.
The cost functional we can study is given by
\begin{equation}\label{eq:cost:heat}
J(u(\cdot)) = \E \int_0^T \int_{-d}^0 \int_0^1 \tilde l(t,\xi,y(t+\theta,\xi), u(t,\xi))d\xi\mu_l(d\theta)dt 
+ \E \int_{T-d}^T \int_0^1 \tilde h(y( \theta,\xi),\xi)d\xi\mu_h(d\theta).
\end{equation}
The measure $\mu_l$ is a regular
measure on $[-d,0]$, and $\mu_h$ is a regular measure on $[T-d,T]$.

\noindent In abstract reformulation equation (\ref{heat-eq-contr}) is an 
evolution equation in the Hilbert space $L^2([0,1])$ and the space of controls $U$ is itself given by $H=L^2([0,1])$:
equation (\ref{heat-eq-contr}) can be rewritten as
\begin{equation}\label{eq:state:heatabstract}
\begin{system}
dX(t) = \left[ AX(t) +\dis \int_{-d}^0 f(t,(X(t+\theta),u(t))\mu_f(d\theta) \right] dt 
+ g(t)dW(t), \\
X_0 (\theta)= x(\theta), \qquad \theta \in [-d, 0],
\end{system}
\end{equation}
where $X(t)(\xi):=y(t,\xi)$, $A$ is the Laplace operator with Dirichlet boundary conditions, 
$f$ is the  Nemytskii operator associated to $\tilde f$ , and $g$ is the multiplicative operator associated to $\tilde g$ by
\[
 (g(t) z)(\xi):=\tilde g (t,\xi) z(\xi), \text{ for a.e. }\xi \in [0,1], \; \forall \,z\in\,L^2([0,1]).
\]
The process $W$ is a cylindrical Wiener process in $L^2([0,1])$. In abstract reformulation the cost functional can be rewritten as
\begin{equation}\label{eq:cost:heatabstract}
J(u(\cdot)) = \E \int_0^T \int_{-d}^0  l(t,X(t+\theta), u(t))\mu_l(d\theta)dt 
+ \E \int_{T-d}^T  h(X(\theta))\mu_h(d\theta)
\end{equation}
where
\[
  l(t,X(t+\theta), u(t))=\int_0^1 \tilde l(t,\xi,X(t+\theta)(\xi), u(t,\xi))d\xi, \text{ and } 
h(X(\theta))= \int_0^1\tilde h(y(\theta,\xi),\xi)d\xi.
\]
We now make some assumptions on the coefficients and on the cost functional
so that Hypothesis 1 is satisfied.
\begin{hypothesis}\label{heat-ipotesi}
 The functions $\tilde f,$ $\tilde g$, $\tilde l$ and $\tilde h$ are all
measurable and real valued. Moreover
\begin{enumerate}
\item $\tilde f$ is defined on $
[0,1]\times\R\times \R$ and there exists a constant $L>0$ such that
\[
\left|  \tilde{f}\left(\xi,  x,u\right)  -\tilde{f}\left(\xi,
y,u\right)  \right|  \leq L\left|  x-y\right|  ,
\]
for almost all $\xi\in[0,1]$, for all $x,$ $y\in\mathbb{R}$ and $u\in U$. Moreover 
for almost all $\xi\in[0,1]$, $\forall u \in \R$, $\tilde{f}\left(\xi,  \cdot,u\right)  \in
C^{1}_b\left(  \mathbb{R}\right)  $ and $\forall x \in \R$, $\tilde{f}\left(\xi, x,\cdot\right)  \in
C^{1}_b\left(  \mathbb{R}\right)  $;

\item $\tilde g$ is defined on $[0,1]\times \left[  0,T\right]  $ and there exist a constants  $K>0$ such
that for almost all $\xi\in[0,1]$ and for $t\in[0,T]$
\[
\left|  \tilde g\left(\xi, t\right) \right|  \leq K;
\]

\item $\tilde l:\left[  0,T\right]  \times\left[  0,1\right]  \times\mathbb{R
}\times \R  \rightarrow\mathbb{R}$ is measurable and for
a.a. $t\in\left[  0,T\right]  ,$ $\xi\in\left[  0,1\right]  ,$ the map
$\tilde l\left(  \tau,\xi,\cdot,\cdot\right)  :\mathbb{R}\times\R  \rightarrow\mathbb{R}$ is continuous. There exists  $c_{1}$
square integrable on $\left[  0,1\right]  $ such that
\[
\left|\tilde  l\left(  t,\xi,x,u\right)  -\tilde l\left(  t,\xi,y,u\right)  \right|
\leq c_{1}\left(  \xi\right)  \left|  x-y\right|  ,
\]
for $\xi\in\left[  0,1\right]  $, $x,y\in\mathbb{R}$, $t\in\left[
0,T\right]  $, and $u\in\R $. Moreover $\left|
\tilde l\left(  t,\xi,x,u\right)  \right|  \leq c_{2}\left(  \xi\right)  ,$ with
$c_{2}$ integrable on $\left[  0,1\right]  $. In addition, for a.a. $\xi\in [0,1]$ and $t\in[0,T]$, and
for $x,u\in \R$
$\tilde l(t,\xi,\cdot, u)\in C^1_b(\R)$
and $\tilde l(t,\xi,x,\cdot)\in C^1_b(\R)$.

\item $\tilde h$ is defined on $\R\times \left[  0,1\right]  \times\mathbb{R}$ and $\tilde h\left(
\cdot,\xi\right)  $ is uniformly continuous, uniformly with respect to $\xi
\in\left[  0,1\right]  $; moreover $\left|  \tilde h\left(x,  \xi\right)  \right|
\leq c_{3}\left(  \xi\right)  ,$ with $c_{3}$ integrable on $\left[
0,1\right]  $;

\item $x_{0}\in L^{2}\left(  \left[  0,1\right]  \right)  $.
\end{enumerate}
\end{hypothesis}
It is immediate to see that if Hypothesis \ref{heat-ipotesi} holds true for the coefficients
$\tilde f$, $\tilde g$, $\tilde l$ and $\tilde h$, then $f$, $g$, $l$ and $h$ satisfy Hypothesis \ref{ipotesi}.

\bigskip

Another class of systems we can treat is given by controlled stochastic wave equations:
we consider, for $0\leq t\leq T$ and $\xi\in\left[
0,1\right]  $, the following wave equation:
\begin{equation}
\left\{
\begin{array}
[c]{l}%
\frac{\partial^{2}}{\partial t^{2}}y\left(t,\xi\right)  =\frac
{\partial^{2}}{\partial\xi^{2}}y\left(t,\xi\right)+\dis\int_{-d}^0\tilde f\left(\xi,y(t+\theta,\xi),u\left(t,\xi\right)\right)
\mu_f(d\theta)
+\dot{W}\left(t,\xi\right) \\
y\left(t,0\right)  =y\left(t,1\right)  =0,\\
y\left(0,\xi\right)  =x_{0}\left(  \xi\right)  ,\\
\frac{\partial y}{\partial t}\left(t, \xi\right)\mid_{t=0}  =x_{1}\left(  \xi\right),
\end{array}
\right.  \label{waveequation}%
\end{equation}
where $\dot{W}_ t\left(\xi\right)  $ is a space-time white noise on $\left[
0,T\right]  \times\left[  0,1\right]  $ and $u $ is the control process in $L^2(  \Omega\times [0,T],$ $L^{2}\left(  0,1\right))$. The cost functional we are interested in is the following
\begin{equation}
\begin{split}
J\left(  x_{0},x_{1},u\right)  &=\E\int_{0}^{T}\int_{-d}^0\int_{0}^{1}
\tilde{l}\left(  t,\xi,y\left(t+\theta,\xi\right),u\left(t,\xi\right)  \right)
d\xi \mu_l(d\theta)dt\\
&+\E\int_{T-d}^T\int_{0}^{1}\tilde h\left(  \xi,y\left(\theta,\xi\right)  \right)\mu_h(d\theta)
d\xi.\label{costo:wave}
\end{split}
\end{equation}
The measures $\mu_f$ and $\mu_l$ are regular measures on $[-d,0]$, and $\mu_h$ is a regular measure on $[T-d,T]$. On the drift $\tilde f$
and on the costs $\tilde l$ and $\tilde h$ we make the following assumptions:
\begin{hypothesis}
\label{ip costo wave} The functions $\tilde f,$ $\tilde g$, $\tilde l$ and $\tilde h$ are all
measurable and real valued. Moreover
\begin{enumerate}
\item $\tilde f$ is defined on $\left[  0,1\right]
\times\mathbb{R} \times \R $ and there exists a constant $C>0$ such that,
for a.a.  $\xi\in\left[  0,1\right]$, $\forall x,y,u\in \R$ 
\[
\left|  \tilde f\left( \xi,x,u\right)-\tilde f\left(\xi,y,u\right)  \right|
\leq C\left|  x-y\right|  .
\]
Moreover for almost all $\xi\in[0,1]$, for all $x,y\in\R$, $f\left(  \xi,\cdot,u\right)  \in
C_b^{1}\left(  \mathbb{R}\right)  $ and $f\left(  \xi,x,\cdot\right)  \in
C_b^{1}\left(  \mathbb{R}\right)  $.

\item $\tilde l:\left[  0,T\right]  \times\left[  0,1\right]  \times\mathbb{R
}\times \R  \rightarrow\mathbb{R}$ is such that for
a.a. $t\in\left[  0,T\right]  ,$ $\xi\in\left[  0,1\right]  ,$ the map
$\tilde l\left(  \tau,\xi,\cdot,\cdot\right)  :\mathbb{R}\times\R  \rightarrow\mathbb{R}$ is continuous. There exists  $c_{1}$
square integrable on $\left[  0,1\right]  $ such that
\[
\left|\tilde  l\left(  t,\xi,x,u\right)  -\tilde l\left(  t,\xi,y,u\right)  \right|
\leq c_{1}\left(  \xi\right)  \left|  x-y\right|  ,
\]
for $\xi\in\left[  0,1\right]  $, $x,y\in\mathbb{R}$, $t\in\left[
0,T\right]  $ and $u\in\R $. There exists $c_{2}$ integrable on $\left[  0,1\right]  $ such that  $\left|
\tilde l\left(  t,\xi,x,u\right)  \right|  \leq c_{2}\left(  \xi\right) $. Moreover for a.a. $\xi\in [0,1]$ and $t\in[0,T]$, and
for $x,u\in \R$
$\tilde l(t,\xi,\cdot, u)\in C^1_b(\R)$
and $\tilde l(t,\xi,x,\cdot)\in C^1_b(\R)$.

\item $\tilde h$ is defined on $\R\times \left[  0,1\right]  \times\mathbb{R}$ and $\tilde h\left(
\cdot,\xi\right)  $ is uniformly continuous, uniformly with respect to $\xi
\in\left[  0,1\right]  $; moreover $\left|  \tilde h\left(x,  \xi\right)  \right|
\leq c_{3}\left(  \xi\right)  ,$ with $c_{3}$ integrable on $\left[
0,1\right]  $ and for almost all $\xi\in[0,1]$ $\tilde h(\cdot,\xi)\in C^1_b(\R)$;

\item $x_{0}$, $x_{1}\in L^{2}\left(  \left[  0,1\right]  \right)  $.

\end{enumerate}

\end{hypothesis}

We want to write equation (\ref{waveequation}) in an abstract form. We denote by $\Lambda$ the Laplace operator with Drichlet
boundatry conditions and we introduce
the Hilbert space
\[
H=L^{2}\left(  \left[  0,1\right]  \right)  \oplus\mathcal{D}\left(
\Lambda^{-\frac{1}{2}}\right)  =L^{2}\left(  \left[  0,1\right]  \right)
\oplus H^{-1}\left(  \left[  0,1\right]  \right)  .
\]On $H$ we define the operator $A$ by
\[
\mathcal{D}\left(  A\right)  =H_{0}^{1}\left(  \left[  0,1\right]  \right)
\oplus L^{2}\left(  \left[  0,1\right]  \right)  ,\text{ \ \ \ \ }A\left(
\begin{array}
[c]{c}%
y\\
z
\end{array}
\right)  =\left(
\begin{array}
[c]{cc}%
0 & I\\
-\Lambda & 0
\end{array}
\right)  \left(
\begin{array}
[c]{c}%
y\\
z
\end{array}
\right)  ,\text{ \ for every }\left(
\begin{array}
[c]{c}%
y\\
z
\end{array}
\right)  \in\mathcal{D}\left(  A\right)  .
\]
We also set $f:H\times L^2([0,1])\rightarrow L^2([0,1])$ given by
\[
 f\left(\left(\begin{array}
[c]{c}%
y\\
z
\end{array}\right),u \right)\left(\xi\right):=\tilde f(\xi, y(\xi), u(\xi)), \quad  \text{for all } \left(\begin{array}
[c]{c}%
y\\
z
\end{array} \right)\in H, \, \xi \in[0,1],
\]
and 
 $g:L^{2}\left(  \left[  0,1\right]  \right)  \longrightarrow H$
with $$
gu=\left(
\begin{array}
[c]{c}%
0\\
u
\end{array}
\right)  =\left(
\begin{array}
[c]{c}%
0\\
I
\end{array}
\right)  u
,$$
for all $u\in L^2([0,1])$.

\noindent Equation (\ref{waveequation}) can be rewritten in an abstract way as an
equation in $H$ of the following form:%
\begin{equation}
\left\{
\begin{array}
[c]{l}%
dX(t) =AX(t)dt+g\int_{-d}^0f(\left( X(t+\theta), u(t)\right)dt+gdW_t  ,\text{ \ \ \ }t\in\left[
0,T\right] \\
X_0 =x.
\end{array}
\right.  \label{waveeqabstract}%
\end{equation}
The cost functional (\ref{costo:wave})
\begin{equation}\label{eq:cost:waveabstract}
J(u(\cdot)) = \E \int_0^T \int_{-d}^0  l(t,X(t+\theta), u(t))\mu_l(d\theta)dt 
+ \E \int_{T-d}^T  h(X(\theta))\mu_h(d\theta)
\end{equation}
where
\[
  l(t,X(t+\theta), u(t))=\int_0^1 \tilde l(t,\xi,X_1(t+\theta,\xi), u(t,\xi))d\xi, \text{ and } 
h(X(\theta))= \int_0^1\tilde h(X_1(\theta,\xi),\xi)d\xi.
\]
where $X_1,X_2$ are the first and the second component of an element $X\in H$, namely
$X=\left( 
\begin{array}{l}
X_1\\
X_2
\end{array}
\right)$.
It is immediate to see that if Hypothesis \ref{ip costo wave} holds true for the coefficients
$\tilde f$, $\tilde g$, $\tilde l$ and $\tilde h$, then $f$, $g$, $l$ and $h$ satisfy Hypothesis \ref{ipotesi}.

We notice that, since in its abstract formulation equation (\ref{waveequation}) reads as (\ref{waveeqabstract}), it satisfies the structure condition,
and so the control problem could be treated by solving the associated Hamilton Jacobi Bellmann equation, as in \cite{zhou2012existence}.
With our techniques we can treat equations more general than equation (\ref{waveequation}), for example  with a diffusion term $\tilde g :[0,1]
\to\R$ not invertible.

\section{Analysis of the state equation}

In this Section we study the state equation 
(\ref{eq:state}) and its behaviour with respect to a convex perturbation of the optimal control. Here and in the following, for the sake of brevity, we consider a drift $f$ and a current cost
$l$ which do not depend on time.
A mild solution to equation (\ref{eq:state}) is 
an $\Fcal_t$-progressively measurable process, satisfying $\P$-a.s., for $t\in[0,T]$ the integral equation
\begin{equation}\label{eq:state.mild}
\begin{split}
X(t) &= S(t)x(0) + \int_0^t S(t-s) \int_{-d}^0 f(X_s(\theta),u(s))\mu_f(d\theta) ds + \int_0^t S(t-s)g(s)dW(s).
\end{split}
\end{equation}
We refer e.g. to \cite{da1992stochastic} for the basic properties of mild solution
of evolution equations.
In the following Proposition we prove existence of a mild solution for equation (\ref{eq:state}), for every admissible control $u$, and we show that this mild solution has smooth dependence with respect to the initial condition.

\begin{proposition}\label{prop-esistenza} Let Hypothesis \ref{ipotesi}, points H0, H1, H2, H4, holds true. Then there exists a unique mild solution $X$
to equation (\ref{eq:state}) and for every $p\geq 1$  there exists a constant $c >0$ such that
\begin{equation}\label{stimafor}
\E\sup_{t\in [-d,T]}|X(t)|^p_H \leq c (1+ |x|)^p_{C([-d,0];H)}.
\end{equation} 
\end{proposition}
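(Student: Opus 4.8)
The plan is to solve the mild equation \eqref{eq:state.mild} by a contraction argument in the Banach space $\mathcal{K}_p:=L^p_{\Fcal}(\Omega;C([-d,T];H))$ of progressively measurable processes agreeing with the datum $x$ on $[-d,0]$, after first disposing of the stochastic convolution as a fixed forcing term. Concretely, write $W_A(t):=\int_0^t S(t-s)g(s)\,dW(s)$ for the last term in \eqref{eq:state.mild}; the heart of the preliminary work is to show that $W_A\in L^p_{\Fcal}(\Omega;C([0,T];H))$ for every $p\ge1$. For this I would invoke the factorization method of \cite{da1992stochastic}: choosing $\alpha\in(\gamma,1/2)$ and setting $Y(s):=\int_0^s (s-r)^{-\alpha}S(s-r)g(r)\,dW(r)$, one has
\[
W_A(t)=c_\alpha\int_0^t (t-s)^{\alpha-1}S(t-s)Y(s)\,ds.
\]
Hypothesis \ref{ipotesi}(H4), namely $|e^{sA}g(t)|_{\Lcal_2(K,H)}\le Cs^{-\gamma}$ with $\gamma<1/2$, combined with the It\^o isometry gives $Y\in L^p_{\Fcal}(\Omega\times[0,T];H)$ as soon as $\alpha+\gamma<1/2$, and the convolution with the integrable-but-singular kernel $(t-s)^{\alpha-1}$ transfers this into continuity in time once $p\alpha>1$. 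This yields the claim for $p$ large, and the case of every $p\ge1$ follows by H\"older's inequality.

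With $W_A$ in hand I would define the map $\Gamma$ on $\mathcal{K}_p$ by $(\Gamma X)(t)=x(t)$ for $t\in[-d,0]$ and
\[
(\Gamma X)(t)=S(t)x(0)+\int_0^t S(t-s)\int_{-d}^0 f(X_s(\theta),u(s))\,\mu_f(d\theta)\,ds+W_A(t),\qquad t\in[0,T].
\]
That $\Gamma$ maps $\mathcal{K}_p$ into itself follows from (H1), the finiteness of $\mu_f$ in (H0), the linear growth of $f$ in (H2), the admissibility \eqref{eq:admissible_control} of $u$ (to control $\int_0^t|u(s)|\,ds$ by H\"older), and the first step for $W_A$. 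For the contraction, the Lipschitz bound in (H2) gives, for $X,Y\in\mathcal{K}_p$ and $t\in[0,T]$,
\[
|(\Gamma X)(t)-(\Gamma Y)(t)|\le Me^{\omega T}C_1|\mu_f|([-d,0])\int_0^t \sup_{r\in[0,s]}|X(r)-Y(r)|\,ds,
\]
where crucially the delay is harmless: since $X_s(\theta)-Y_s(\theta)=X(s+\theta)-Y(s+\theta)$ and $X=Y=x$ on $[-d,0]$, the supremum over $\theta\in[-d,0]$ is bounded by $\sup_{r\in[0,s]}|X(r)-Y(r)|$. Taking the supremum over $t$, raising to the $p$-th power, using H\"older in time and taking expectation gives a Gronwall-type inequality; either a weighted norm $\sup_t e^{-\beta t}(\E\sup_{r\le t}|\cdot|^p)^{1/p}$ or iteration on short subintervals makes $\Gamma$ a contraction, and the Banach fixed point theorem produces the unique mild solution.

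The a priori estimate \eqref{stimafor} then follows by feeding the solution itself into the same bounds. Starting from \eqref{eq:state.mild}, the growth condition $|f(x,u)|\le C_1(1+|x|+|u|)$ and the finiteness of $\mu_f$ yield
\[
\sup_{r\in[0,t]}|X(r)|\le Me^{\omega T}|x(0)|+Me^{\omega T}C_1|\mu_f|([-d,0])\int_0^t\Big(1+\sup_{\sigma\in[-d,s]}|X(\sigma)|+|u(s)|\Big)ds+\sup_{r\in[0,T]}|W_A(r)|.
\]
Raising to the power $p$, taking expectation, bounding the $W_A$ term by the first step, bounding $\int_0^t|u(s)|\,ds$ through \eqref{eq:admissible_control}, and including the trivial contribution $\sup_{[-d,0]}|x|$ on the left, Gronwall's lemma gives $\E\sup_{t\in[-d,T]}|X(t)|^p\le c(1+|x|_{C([-d,0];H)})^p$.

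The main obstacle is the first step: establishing the space-time regularity of $W_A$ under the genuinely singular assumption (H4), for which the strict inequality $\gamma<1/2$ is indispensable, since it is exactly what leaves room to choose $\alpha$ with $1/p<\alpha<1/2-\gamma$ and thereby obtain a continuous modification with finite $p$-th moments. By contrast the delay introduces no anticipation difficulty, as $X_s(\theta)$ only reads past values, so the fixed point and the Gronwall estimate proceed exactly as in the undelayed case.
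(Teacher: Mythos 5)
Your proposal is correct and follows essentially the same route as the paper: the paper's (sketched) proof also runs a Banach fixed-point argument for the map $\Gamma$ in a space of continuous trajectories --- formulated there on $L^p_\Fcal(\Omega;C([0,T],E))$ with $E=C([-d,0],H)$, which is equivalent to your $C([-d,T];H)$ setting since $\E\sup_{t\in[-d,T]}|X(t)|^p=\E\sup_{t\in[0,T]}\|X_t\|_E^p$ --- relying on the Lipschitz continuity of $f$ and the finite total variation of $\mu_f$, and delegating the regularity of the stochastic convolution under (H4) to the factorization results cited from \cite{fuhrman2002nonlinear}, which you spell out explicitly. Only fix the slip at the start of your first step: the admissible range is $\alpha\in(0,1/2-\gamma)$ (so that $\alpha+\gamma<1/2$), not $\alpha\in(\gamma,1/2)$, as you yourself state correctly later with $1/p<\alpha<1/2-\gamma$.
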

\begin{proof}This result is collected in \cite{zhou2012existence}, Theorem 3.2. In \cite{zhou2012existence} no proof is given,
referring to \cite{fuhrman2002nonlinear}, where the case of $H$-valued stochastic evolution equations without delay is considered.

The proof follows by techniques in \cite{fuhrman2002nonlinear} and in \cite{mohammed1998stochastic}, we just give a sketch.
We introduce the map $$\Gamma: L^p_\Fcal(\Omega,C([0,T],E))\mapsto L^p_\Fcal(\Omega,C([0,T],E))$$
defined by, $\forall \theta \in [-d,0]$, 
\begin{equation}\label{Gamma}
 \left(\Gamma (Y)_t\right)(\theta):= \tilde \Gamma(Y)_{t+\theta},
\end{equation}
where for all $s\in [-d,T]$, we define $\tilde \Gamma_s : L^p_\Fcal(\Omega,C([0,T],H))\mapsto L^p_\Fcal(\Omega,C([0,T],H))$ as
\begin{align}\label{tildeGamma}
&\tilde \Gamma(Y)_s=x(s),\qquad s\in [-d,0],\\ \nonumber
& \tilde \Gamma(Y)_s=S(s)x(0) + \int_0^s S(s-r) \int_{-d}^0 f(Y(r+\theta),u(r))\mu_f(d\theta) dr \\ \nonumber 
&\qquad \quad + \int_0^t S(s-r)g(r)dW(r), \quad s\in [0,T].
\end{align}
It turns out that $\Gamma$ is well defined from $L^p_\Fcal(\Omega,C([0,T],E))$ to $L^p_\Fcal(\Omega,C([0,T],E))$
and it is a contraction: to this aim it suffices to notice that
\[
 \E\sup_{t\in [-d,T]}|X(t)|^p=\E\sup_{t\in [0,T]}\|X_t\|_E^p
\]
and to apply arguments in \cite{fuhrman2002nonlinear}, Propositions 3.2 and 3.3.
The crucial point is the fact that $f$ is Lipschitz continuous and the
measure $\mu_f$ has finite total variation.
\end{proof}

Let $ (\bar{X}(\cdot),\bar{u}(\cdot))$ be an optimal pair. For any $w(\cdot) \in \mathcal{U}_{ad}$ we can define the perturbed control  
\begin{equation}\label{u^rho}
u^{\rho}(\cdot) :=  \bar{u}(\cdot) + \rho v(\cdot) 
\end{equation}
 where $v(\cdot) = w(\cdot) - \bar u(\cdot)$ and $0\leq \rho \leq 1$. The perturbed control $u^{\rho}(\cdot)$ is admissible due to the convexity of the set $U_c$ and the corresponding state is denoted by $X^{\rho}$. 

\begin{lemma}\label{lemma:x epsilon}
Under Hypothesis 1, points H0, H1, H2 and H4, the following holds
\begin{equation*}
\E \operatorname{sup}_{t \in [0,T]} |X^{\rho}(t) - \bar{X}(t)|_H^2 \rightarrow 0, \quad  \, \text{as } \,\rho \rightarrow 0.
\end{equation*}
\end{lemma}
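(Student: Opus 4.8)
The plan is to work with the mild formulation \eqref{eq:state.mild} for both $X^\rho$ and $\bar X$ and to exploit the crucial fact that, since the diffusion coefficient $g$ does not depend on the control, the two stochastic convolutions $\int_0^t S(t-s)g(s)\,dW(s)$ coincide and cancel in the difference. Writing $\Delta(t):=X^\rho(t)-\bar X(t)$, one is therefore left with the purely pathwise (Bochner) integral identity
\begin{equation*}
\Delta(t)=\int_0^t S(t-s)\int_{-d}^0\big[f(X^\rho_s(\theta),u^\rho(s))-f(\bar X_s(\theta),\bar u(s))\big]\mu_f(d\theta)\,ds,
\end{equation*}
which contains no It\^o term. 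This is what makes the supremum-in-time estimate accessible by a deterministic Gronwall argument rather than by a Burkholder-type inequality.

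Next I would split the integrand by adding and subtracting $f(\bar X_s(\theta),u^\rho(s))$. The first piece is controlled by the Lipschitz continuity of $f$ in $x$ (H2), being bounded by $C_1|\Delta(s+\theta)|$. The second piece, namely
\begin{equation*}
\eta(s):=\int_{-d}^0\big[f(\bar X_s(\theta),u^\rho(s))-f(\bar X_s(\theta),\bar u(s))\big]\mu_f(d\theta),
\end{equation*}
carries the whole effect of the control perturbation and is the only term that has to vanish as $\rho\to0$. Using the semigroup bound $|S(t)|_{\Lcal(H)}\le Me^{\omega T}$ (H1), the finite total variation of $\mu_f$ (H0), and the fact that $\Delta\equiv0$ on $[-d,0]$ (both states equal the datum $x$ there, so that $\sup_{r\in[-d,t]}|\Delta(r)|=\sup_{r\in[0,t]}|\Delta(r)|$), I would set $\phi(t):=\sup_{r\in[0,t]}|\Delta(r)|$ and, after a Cauchy--Schwarz step in $s$, arrive at an estimate of the form
\begin{equation*}
\E\,\phi(t)^2\le c_1\int_0^t\E\,\phi(s)^2\,ds+c_2\,\E\!\int_0^T|\eta(s)|^2\,ds,
\end{equation*}
with $c_1,c_2$ depending only on $M,\omega,T,C_1$ and $|\mu_f|([-d,0])$. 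Gronwall's lemma then reduces the entire statement to proving that $\E\int_0^T|\eta(s)|^2\,ds\to0$.

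Finally I would establish this last convergence by dominated convergence on $\Omega\times[0,T]$. Since $u^\rho(s)=\bar u(s)+\rho v(s)\to\bar u(s)$ pointwise as $\rho\to0$ and $f$ is continuous in $u$ (H2), letting $\rho\to0$ inside the $\theta$-integral (finite by H0) gives $\eta(s)\to0$ a.e. For the dominating function, the growth bound $|f(t,x,u)|\le C_1(1+|x|+|u|)$ together with $\rho\le1$ yields
\begin{equation*}
|\eta(s)|\le C_1\,|\mu_f|([-d,0])\big(2+2\sup_{r\in[-d,T]}|\bar X(r)|+2|\bar u(s)|+|v(s)|\big),
\end{equation*}
which is square-integrable on $\Omega\times[0,T]$ uniformly in $\rho$: the supremum of $\bar X$ lies in $L^2(\Omega)$ by the moment estimate \eqref{stimafor} of Proposition \ref{prop-esistenza} with $p=2$, while $\bar u,v\in L^2_{\Fcal}(\Omega\times[0,T];U)$ by admissibility. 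Dominated convergence then gives $\E\int_0^T|\eta(s)|^2\,ds\to0$, which completes the proof. I expect the main obstacle to be precisely this last step, namely producing a single $\rho$-independent dominating function that is integrable on $\Omega\times[0,T]$, for which the growth hypothesis in (H2), the moment bound \eqref{stimafor}, and the $L^2$-admissibility of the controls must all be used together; the preceding steps are routine once one notices that the noise terms cancel.
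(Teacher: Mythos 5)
Your proposal is correct and follows essentially the same route as the paper: the mild-form difference with the cancelled stochastic convolution, the splitting via $f(\bar X_s(\theta),u^\rho(s))$ into a Lipschitz term and a perturbation term $\eta$ (the paper's $\nu^\rho$), Gronwall, and dominated convergence using the continuity of $f$ in $u$, the growth bound in (H2), the moment estimate \eqref{stimafor}, and the admissibility of the controls. If anything, you spell out the $\rho$-independent dominating function more explicitly than the paper does.
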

\begin{proof}
Let us write the equation satisfied by $X^\rho - \bar{X}$ in mild form
\begin{equation}
 X^{\rho}(t) - \bar{X}(t) = \int_0^t S(t-s) \int_{-d}^0 \left[ f(X^{\rho}(s+\theta),u^{\rho}(s)) - f(\bar{X}(s+\theta),\bar{u}(s)) \right]\mu_f(d\theta) ds. 
\end{equation}
Then we have
\begin{equation*}
\begin{split}
 |X^{\rho}(t) - \bar{X}(t)|_H^2 &\leq C \int_0^t \|S(t-s)\|^2_{\Lcal (H)} \int_{-d}^0 |f(X^{\rho}(s+\theta),u^{\rho}(s)) - f(\bar{X}(s+\theta),\bar{u}(s))|^2_H|\mu_f|(d\theta) ds \\
 &\leq C \int_0^t \|S(t-s)\|^2_{\Lcal (H)} \int_{-d}^0 |f(X^{\rho}(s+\theta),u^{\rho}(s)) - f(\bar{X}(s+\theta),u^{\rho}(s))|^2_H|\mu_f|(d\theta) ds \\
 &+ C \int_0^t \|S(t-s)\|^2_{\Lcal (H)} \int_{-d}^0 |f(\bar{X}(s+\theta),u^{\rho}(s)) - f(\bar{X}(s+\theta),\bar{u}(s))|^2_H|\mu_f|(d\theta) ds. \\
\end{split}
\end{equation*}
Thanks to Hypotheses H1, H2 and the finiteness of the total variation measure $|\mu_f|$, taking the supremum in $t \in [0,T]$ we obtain
\begin{equation}
\begin{split}
 |X^{\rho}(t) - \bar{X}(t)|_H^2 &\leq K \int_0^t \sup_{r \in [0,s]} |X^{\rho}(r) - \bar{X}(r)|_H^2ds +  \nu^{\rho}(t) \\
 &\leq K\int_0^T \sup_{r \in [0,s]} |X^{\rho}(r) - \bar{X}(r)|_H^2ds +  \nu^{\rho}(T)
\end{split}
\end{equation}
where $K$ depends only on $T$ and we denoted by $\nu^{\rho}(t)$ the quantity
\[ \nu^{\rho}(T):= \int_0^T \int_{-d}^0 |f(\bar{X}(s+\theta),u^{\rho}(s)) - f(\bar{X}(s+\theta),\bar{u}(s))|^2_H|\mu_f|(d\theta) ds. \] 
Let us notice that, by H2, by estimates \eqref{stimafor}, and the definition of $u^\rho$ in \eqref{u^rho}, we can apply
the Dominated Convergence Theorem. By the continuity of the map $f$ with respect to $u$, we get $\nu^\rho(t) \to 0$ if $\rho \to 0$.
Now we can take the supremum over the entire interval $[0,T]$ to obtain
\begin{equation*}
\sup_{t \in [0,T]}|X^{\rho}(t) - \bar{X}(t)|_H^2 \leq K \int_0^T \sup_{t \in [0,s]}|X^{\rho}(t) - \bar{X}(t)|_H^2 ds + \nu^{\rho}(T).
\end{equation*}
Using the Gronwall Lemma we get
\begin{equation}
\E\sup_{t \in [0,T]}|X^{\rho}(t) - \bar{X}(t)|_H^2 \leq \tilde{K}\E\left[ \nu^{\rho}(T)\right],
\end{equation}
and letting $\rho \to 0$ we get the required result. 
\end{proof}

Now we can introduce the first variation process $Y(t)$, which satisfies the following equation
\begin{equation}\label{eq:first.variation}
\left\lbrace\begin{array}{l}
\frac{d}{dt}Y(t) = AY(t) + \dis\int_{-d}^0 D_xf(X(t+\theta),u(t))Y(t+\theta)\mu_f(d\theta) + 
\dis\int_{-d}^0D_uf(X(t+\theta),u(t))\mu_f(d\theta) v(t)  \\
Y_0 = 0.   
\end{array}\right.
\end{equation}

This equation is well-posed in a mild sense, as we show in the following
\begin{proposition}
Let Hypothesis 1 be in force. Then equation \eqref{eq:first.variation} admits a unique mild solution solution, i.e. a progressive $H$-valued process $Y \in L^2_{\Fcal}(\Omega \times [0,T];H)$ such that for all $t \in [0,T]$, $\mP$-a.s.
\begin{equation}
Y(t) = \int_0^t S(t-s)\int_{-d}^0 \left[D_xf(X(s+\theta),u(s))Y(s+\theta) + D_uf(X(s+\theta),u(s))v(s) \right]\mu_f(d\theta)ds \\
\end{equation}  
\end{proposition}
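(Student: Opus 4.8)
The plan is to solve the affine equation \eqref{eq:first.variation} by a fixed-point argument in $L^2_{\Fcal}(\Omega\times[-d,T];H)$, exactly in the spirit of the proof of Proposition \ref{prop-esistenza}, exploiting that the coefficients $D_xf$ and $D_uf$ are bounded (Hypothesis \ref{ipotesi}, H3) and that $|\mu_f|$ is finite (H0). First I would fix the optimal pair $(\bar X,\bar u)$ together with $v=w-\bar u\in L^2_{\Fcal}(\Omega\times[0,T];U)$ and isolate the forcing term
\begin{equation*}
\psi(t) := \int_0^t S(t-s)\int_{-d}^0 D_uf(X(s+\theta),u(s))\,v(s)\,\mu_f(d\theta)\,ds,
\end{equation*}
which belongs to $L^2_{\Fcal}(\Omega\times[0,T];H)$ because $\|D_uf\|_{\Lcal(U;H)}\le C_2$, $|\mu_f|([-d,0])<\infty$, $|S(t-s)|_{\Lcal(H)}\le Me^{\omega T}$ on $[0,T]$, and $v\in L^2$.

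Next I would define the affine map $\Psi$ on the closed subspace of $L^2_{\Fcal}(\Omega\times[-d,T];H)$ of processes vanishing on $[-d,0]$, by setting $(\Psi Y)(t)=0$ for $t\in[-d,0]$ and
\begin{equation*}
(\Psi Y)(t) := \psi(t)+\int_0^t S(t-s)\int_{-d}^0 D_xf(X(s+\theta),u(s))\,Y(s+\theta)\,\mu_f(d\theta)\,ds,\qquad t\in[0,T].
\end{equation*}
A fixed point of $\Psi$ is precisely a mild solution, and since $\Psi Y$ is progressively measurable and vanishes on $[-d,0]$, the map sends the subspace into itself. The core step is the contraction estimate: for two processes $Y_1,Y_2$ vanishing on $[-d,0]$, using $\|D_xf\|_{\Lcal(H)}\le C_1$, the Cauchy--Schwarz inequality against the finite measure $|\mu_f|$, and $|S|_{\Lcal(H)}\le Me^{\omega T}$, I obtain
\begin{equation*}
\E\,|(\Psi Y_1)(t)-(\Psi Y_2)(t)|_H^2 \le C\int_0^t \E\int_{-d}^0 |Y_1(s+\theta)-Y_2(s+\theta)|_H^2\,|\mu_f|(d\theta)\,ds,
\end{equation*}
with $C$ depending only on $C_1,M,\omega,T$ and $|\mu_f|([-d,0])$.

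The delay is then handled by Fubini and the change of variable $r=s+\theta$: since $Y_1-Y_2$ vanishes on $[-d,0]$,
\begin{equation*}
\int_0^t \E\int_{-d}^0 |Y_1(s+\theta)-Y_2(s+\theta)|_H^2\,|\mu_f|(d\theta)\,ds \le |\mu_f|([-d,0])\int_0^t \E\,|(Y_1-Y_2)(r)|_H^2\,dr.
\end{equation*}
Thus $t\mapsto\E\,|(\Psi Y_1)(t)-(\Psi Y_2)(t)|^2$ is dominated by the time-integral of $\E|Y_1-Y_2|^2$; iterating gives $\E\int_0^T|(\Psi^nY_1)(t)-(\Psi^nY_2)(t)|^2\,dt \le \frac{(C'T)^n}{n!}\,\E\int_0^T|Y_1-Y_2|^2\,dt$, so some power of $\Psi$ is a strict contraction. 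Equivalently one may use the weighted norm $\E\int_{-d}^T e^{-\beta t}|Y(t)|^2\,dt$ with $\beta$ large. Either way the Banach fixed-point theorem yields a unique $Y$, the desired mild solution in $L^2_{\Fcal}(\Omega\times[0,T];H)$.

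The only delicate point---the ``main obstacle''---is the bookkeeping of the delay against a \emph{general} measure $\mu_f$: one must keep the shifted argument $Y(s+\theta)$ inside the $|\mu_f|$-integral and apply Fubini and the substitution $r=s+\theta$ only afterwards, using crucially that $\mu_f$ has finite total variation and that $Y\equiv 0$ on $[-d,0]$, so that the shifted integral never exceeds $\int_{-d}^T\E|Y|^2$. All remaining manipulations (progressive measurability of $\Psi Y$ and the affine estimate for $\psi$) are routine and parallel to Proposition \ref{prop-esistenza}.
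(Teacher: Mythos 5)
Your proposal is correct and follows essentially the same route as the paper: the paper's proof is a one-line reference back to the contraction argument of Proposition \ref{prop-esistenza} (noting the equation is linear with no diffusion term), and your spelled-out fixed-point argument --- boundedness of $D_xf$ and $D_uf$, finiteness of $|\mu_f|$, the change of variable $r=s+\theta$ exploiting $Y\equiv 0$ on $[-d,0]$, and a Picard iteration or weighted norm --- is precisely that argument made explicit, merely carried out in $L^2_{\Fcal}(\Omega\times[-d,T];H)$ rather than in the space $L^p_{\Fcal}(\Omega;C([0,T],E))$ used for the state equation, which suffices since the statement only asserts an $L^2$ solution.
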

\begin{proof}
The proof follows like
the one of Proposition \ref{prop-esistenza}, the equation here is linear and there is no diffusion term.
\end{proof}
Once we have defined the the first variation process $Y$, we can write an expansion up to first order of the perturbed trajectory of the state
\begin{equation}\label{resto:fin:rho}
 X^\rho(t)=\bar{X}(t)+\rho Y(t)+R^\rho(t),\, \quad t\in[0,T].
\end{equation}
The aim is to show that the rest goes to zero in the right topology, i.e.
\[\lim_{\rho\to 0}\frac{1}{\rho^2}\E\sup_{t\in[0,T]}\vert R^\rho(t)\vert^2=0,\]
This is the content of the following
\begin{lemma}
Let Hypothesis 1, points H0, H1, H2, H3, H4,  holds. Then the process $\tilde{X}^{\rho}$ defined as
\begin{equation}\label{resto:fin:rho:frac}
\tilde{X}^{\rho}(t) = \dfrac{X^{\rho}(t) - \bar{X}(t)}{\rho} - Y(t),
\end{equation}
satisfies the following
\begin{equation}\label{lemma:convergence X tilde}
\lim_{\rho \rightarrow 0}\E \sup_{t \in [0,T]}|\tilde{X}^{\rho}(t)|^2 = 0.
\end{equation}
\end{lemma}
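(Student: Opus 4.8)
The plan is to derive a closed mild equation for $\tilde X^\rho$ itself, to show that it is governed by the same Gronwall mechanism used in Lemma \ref{lemma:x epsilon}, and thereby to reduce the whole statement to the vanishing of a single ``frozen-coefficient'' error term. First I would write the mild equations for $X^\rho$ and $\bar X$, subtract them, divide by $\rho$ and subtract $Y$. Since $f\in\calg^{0,1,1}$, its G\^ateaux derivatives are strongly continuous, so the fundamental theorem of calculus applies along the segments joining $\bar X$ to $X^\rho$ and $\bar u$ to $u^\rho=\bar u+\rho v$: suppressing the arguments,
\[
f(X^\rho,u^\rho)-f(\bar X,\bar u)=\int_0^1 D_xf(\bar X+\lambda(X^\rho-\bar X),u^\rho)(X^\rho-\bar X)\,d\lambda+\rho\int_0^1 D_uf(\bar X,\bar u+\lambda\rho v)\,v\,d\lambda.
\]
Dividing by $\rho$ and using $\frac{X^\rho(s+\theta)-\bar X(s+\theta)}{\rho}=\tilde X^\rho(s+\theta)+Y(s+\theta)$ together with the mild equation for $Y$, I obtain
\[
\tilde X^\rho(t)=\int_0^t S(t-s)\int_{-d}^0\left(\int_0^1 D_xf(\bar X(s+\theta)+\lambda(X^\rho(s+\theta)-\bar X(s+\theta)),u^\rho(s))\,d\lambda\right)\tilde X^\rho(s+\theta)\,\mu_f(d\theta)\,ds+I^\rho(t),
\]
where $I^\rho(t)$ gathers the two residual terms in which the averaged derivatives act on $Y(s+\theta)$ and on $v(s)$, each compared with the frozen derivatives $D_xf(\bar X(s+\theta),\bar u(s))$ and $D_uf(\bar X(s+\theta),\bar u(s))$ appearing in the equation for $Y$. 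Note that $\tilde X^\rho(r)=0$ for $r\in[-d,0]$, since $X^\rho$ and $\bar X$ share the initial datum and $Y_0=0$.

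Second, I would estimate. On $[0,T]$ the semigroup is bounded by $Me^{\omega T}$ (H1), the averaged $D_xf$ is bounded by $C_1$ (H3), and $|\mu_f|$ is finite (H0); applying Cauchy--Schwarz in $(s,\theta,\lambda)$ and using $|\tilde X^\rho(s+\theta)|\le\sup_{r\in[0,s]}|\tilde X^\rho(r)|$ (thanks to $\tilde X^\rho\equiv 0$ on $[-d,0]$ and $s+\theta\le s$), I get
\[
\sup_{t\in[0,\tau]}|\tilde X^\rho(t)|^2\le K\int_0^\tau\sup_{r\in[0,s]}|\tilde X^\rho(r)|^2\,ds+K\sup_{t\in[0,T]}|I^\rho(t)|^2.
\]
Taking expectations and applying the Gronwall lemma reduces the statement to proving $\E\sup_{t\in[0,T]}|I^\rho(t)|^2\to 0$.

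Third, and this is where the real work lies, I would show that the error term vanishes. Bounding $\sup_t|I^\rho(t)|^2$ as above leaves two expectations of the form $\E\int_0^T\int_{-d}^0|\zeta^\rho(s,\theta)|^2\,|\mu_f|(d\theta)\,ds$, where, for the state part,
\[
\zeta^\rho(s,\theta)=\left(\int_0^1 D_xf(\bar X(s+\theta)+\lambda(X^\rho(s+\theta)-\bar X(s+\theta)),u^\rho(s))\,d\lambda-D_xf(\bar X(s+\theta),\bar u(s))\right)Y(s+\theta).
\]
By Lemma \ref{lemma:x epsilon}, $\sup_t|X^\rho(t)-\bar X(t)|\to 0$ in $L^2$, hence in probability, while $u^\rho(s)=\bar u(s)+\rho v(s)\to\bar u(s)$ deterministically; so along any sequence $\rho_n\to 0$ I can extract a subsequence along which $X^{\rho_n}\to\bar X$ uniformly $\mP$-a.s. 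The \emph{crucial} point is that $D_xf$ is only strongly, not norm, continuous, so the comparison must be made after applying the operators to the fixed element $Y(s+\theta)$: strong continuity then gives $D_xf(\bar X(s+\theta)+\lambda(X^\rho-\bar X),u^\rho)Y(s+\theta)\to D_xf(\bar X(s+\theta),\bar u(s))Y(s+\theta)$ pointwise, and bounded convergence in $\lambda$ (the operators being uniformly bounded by $C_1$) yields $\zeta^\rho\to 0$ pointwise. The domination $|\zeta^\rho(s,\theta)|^2\le 4C_1^2|Y(s+\theta)|^2$ is integrable because $Y\in L^2_\Fcal$ and $|\mu_f|$ is finite, so the dominated convergence theorem applies; the analogous $D_uf$-term is handled identically, using $\|D_uf\|\le C_2$ and $v\in L^2$. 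Since every subsequence admits a further subsequence along which the integral tends to the same limit $0$, the full convergence $\E\sup_t|I^\rho(t)|^2\to 0$ follows, and with it the claim \eqref{lemma:convergence X tilde}. The main obstacle is precisely this last step: extracting the almost sure convergence needed for dominated convergence and circumventing the fact that only strong continuity of the derivatives is available.
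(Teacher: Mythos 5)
Your proposal is correct and follows essentially the same route as the paper: the same Taylor expansion of $f$ along the segments joining $(\bar X,\bar u)$ to $(X^\rho,u^\rho)$, the same closed mild equation for $\tilde X^\rho$ with a residual term collecting the averaged-minus-frozen derivatives acting on $Y$ and $v$, and the same Gronwall reduction. In fact your third step is more careful than the paper's, which disposes of the residual $\nu_\rho(T)$ with a one-line appeal to the boundedness of $D_xf$ and $D_uf$; your subsequence extraction, use of \emph{strong} (rather than norm) continuity of the derivatives applied to the fixed elements $Y(s+\theta)$ and $v(s)$, and dominated convergence supply exactly the justification that line leaves implicit.
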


\begin{proof}
To lighten the notation, here we adopt the convention introduced in \eqref{defdiX_t}. 
The process $\tilde{X}^{\rho}$ defined in (\ref{resto:fin:rho:frac}) is the solution to the following integral equation
\begin{equation*}
\begin{split}
\tilde{X}^{\rho}(t) &= \int_0^t S(t-s)\frac{1}{\rho} \int_{-d}^0 \bigl[ f( \bar{X}_s(\theta) + \rho Y_s( \theta) + \rho \tilde{X}^{\rho}_s(\theta),\bar{u}(s)+\rho v(s)) - f(\bar{X}_s(\theta),\bar{u}(s)) \bigr]\mu_f(d\theta)ds \\
&- \int_0^t S(t-s) \int_{-d}^0 \bigl[ D_xf(\bar{X}_s(\theta),\bar{u}(s)) Y_s(\theta) + D_u f(\bar{X}_s(\theta),\bar{u}(s)) v(s)\bigr] \mu_f(d\theta) ds 
\end{split}
\end{equation*}
with $\tilde{X}^{\rho}(0) = 0 $ as initial datum. Via standard computations
we obtain
\begin{equation*}
\begin{split}
\tilde{X}^{\rho}(t) &= \int_0^tS(t-s) \int_0^1\int_{-d}^0 D_xf\bigl(\bar{X}_s(\theta) + \lambda\rho( Y_s(\theta) + \tilde{X}^{\rho}_s(\theta)),\bar{u}(s)\bigr)\tilde{X}^{\rho}_s(\theta)\, \mu_f(d\theta) d\lambda ds\\
&+ \int_0^tS(t-s)\int_0^1 \int_{-d}^0 \bigl[ D_xf\bigl(\bar{X}_s(\theta) + \lambda\rho( Y_s(\theta) + \tilde{X}^{\rho}_s(\theta)),\bar{u}(s)\bigr) \\
&\qquad - D_xf(\bar{X}_s(\theta),\bar{u}(s))\bigr]Y_s(\theta)\mu_f(d\theta) d\lambda ds \\
&+ \int_0^tS(t-s)\int_0^1 \int_{-d}^0 \bigl[ D_u f\bigl(\bar{X}_s(\theta) + \lambda\rho( Y_s(\theta) + \tilde{X}^{\rho}_s(\theta)),\bar{u}(s)+ 
\lambda\rho v(s)\bigr)\\
&\qquad  - D_uf(\bar{X}_s(\theta),\bar{u}(s))\bigr]v(s)\, \mu_f(d\theta)d\lambda ds ,
\end{split}
\end{equation*}
so that
\begin{equation*}
\begin{split}
|\tilde{X}^{\rho}(t)|^2_H &\leq K\int_0^t \|S(t-s)\|^2_{\Lcal(H)} \Big|\int_{-d}^0 \tilde{X}^{\rho}_s(\theta) \mu_f(d\theta)\Big|^2_H ds\\
 & + \int_0^T  \|S(T-t)\|^2_{\Lcal(H)}\Big| \int_0^1 \int_{-d}^0 \bigl[ D_xf\bigl(\bar{X}_t(\theta) + \lambda\rho( Y_t(\theta) + \tilde{X}^{\rho}_t(\theta)),\bar{u}(t)\bigr) \\
 &\qquad - D_xf(\bar{X}_t(\theta),\bar{u}(t))\bigr]Y_t(\theta)\mu_f(d\theta) d\lambda \Big|^2_H dt \\
 &+ \int_0^T \|S(T-t)\|^2_{\Lcal(H)}\Big| \int_0^1 \int_{-d}^0 \bigl[ D_u f\bigl(\bar{X}_t(\theta) + 
 \lambda\rho( Y_t(\theta) + \tilde{X}^{\rho}_t(\theta)),\bar{u}(t)+ 
 \lambda\rho v(t)\bigr)\\
 &\qquad  - D_uf(\bar{X}_t(\theta),\bar{u}(t))\bigr]v(t)\, \mu_f(d\theta)d\lambda \Big|_H^2 dt \\ 
&\leq K \int_0^T \Big|\int_{-d}^0 \tilde{X}^{\rho}_s(\theta) \mu_f(d\theta)\Big|^2_H ds + \nu_{\rho}(T),
\end{split}
\end{equation*}
where the constant $K$ only depends on $T$ and $\nu_{\rho}(T)$ is defined by
\begin{equation}
\begin{split}
\nu_{\rho}(T)&=  \int_0^T \|S(T-t)\|^2_{\Lcal(H)}
\Big| \int_0^1 \int_{-d}^0 \bigl[ D_u f\bigl(\bar{X}_t(\theta) + 
 \lambda\rho( Y_t(\theta) + \tilde{X}^{\rho}_t(\theta)),\bar{u}(t)+ 
 \lambda\rho v(t)\bigr)\\
 &\quad - D_uf(\bar{X}_t(\theta),\bar{u}(t))\bigr]v(t)\, \mu_f(d\theta)d\lambda \Big|_H^2 dt .
\end{split}
\end{equation}
By the boundedness of $D_xf(\cdot)$ and $D_uf(\cdot)$, we have that $\nu_\rho(t)\to 0$ as $\rho \to 0$. 
\begin{equation}
\begin{split}
|\tilde{X}^{\rho}(t)|^2_H &\leq K\int_0^T \sup_{r \in [0,s]}|\tilde{X}^{\rho}(r)|^2_H ds  + \nu_{\rho}(T) \\
&\leq K\int_0^T \sup_{r \in [0,s]}|\tilde{X}^{\rho}(r)|^2_H ds  + \nu_{\rho}(T),
\end{split}
\end{equation}
Now we can take the supremum on the left hand side 
\begin{equation}
\E\sup_{t \in [0,T]}|\tilde{X}^{\rho}(t)|^2_H \leq K\E\int_0^T \sup_{r \in [0,s]}|\tilde{X}^{\rho}(r)|^2_H ds  + \E\left[ \nu_{\rho}(T) \right].
\end{equation}
Using the Gronwall inequality and the convergence $\nu_\rho(t)\to 0$, as $\rho \to 0$, we get the result.
\end{proof}

\section{A new form of Anticipated Backward SPDE}

In this section we study ABSDEs with suitable form to be the adjoint equation in problems with delay. The solvability of this class of equations allows to formulate a stochastic maximum principle for infinite dimensional controlled state equations with delay, with final cost functional depending on the history of the process.
 
 Many recent papers, we cite among others  \cite{ChenWuAutomatica2010}, \cite{oksendal2011optimal}, \cite{ChenWuYu2012} and \cite{MengShenAMO2015},
 deal with similar problems, but only in the case of final cost
 not depending on the past of the process. Such a general case is treated in the
 paper \cite{hu1996maximum} for finite dimensional systems, and it is here proved by means of ABSDEs. 

We will consider the following infinite dimensional ABSDE, which in integral form is given by
\begin{equation}\label{ABSDEtrascinataMild}
\begin{split}
p(t) &= \int_{t \vee (T-d)}^T\!\! S\left(s-t\right) ' D_x h(X(s)) \mu_h(ds) \\
&+ \int_t^T S(s-t)' \, \E^{\Fcal_s}\!\int_{-d}^0 D_xf(X(s),u(s-\theta))'p(s-\theta)\mu_f(d\theta)\, ds \\ 
&+ \int_t^T S(s-t)'\, \E^{\Fcal_s}\!\int_{-d}^0 D_xl(X(s+\theta),u(s))\mu_l(d\theta)ds - \int_t^T S(t-s)'q(s)\, dW(s),  \quad t \in [0,T]\\
p(t)&=0,  \qquad \text{for all } t \in ]T,T+d], \P-\text{a.s.},\qquad   q(t)=0 \quad  \text{a.e. } t \in ]T,T+d], \P-\text{a.s.} .
\end{split}
\end{equation}
If we do not make additional assumption on the measure $\mu_h$, the differential form of equation (\ref{ABSDEtrascinataMild})
does not make sense, since the term 
\[
 d _t \left[
\int_{t \vee (T-d)}^T\!\! S\left(s- t\right) ' D_x h(X(s)) \mu_h(ds) \right]
 \]
is not well defined for every $t \in [0,T]$, while the notion of mild solution is meaningful also in this case.
\begin{definition}
We say that \eqref{ABSDEtrascinataMild} admits a mild solution if there is a couple of
processes
$$(p,q) \in L^2_\mathcal{F}(\Omega\times [0,T+d];H)
\times L^2_\mathcal{F}(\Omega\times [0,T+d]; \mathcal{L}_2(K;H))$$
that solves equation \eqref{ABSDEtrascinataMild} for all $t \in [0,T+d]$.
\end{definition}
We will prove existence and uniqueness for such equation by an approximation procedure. We
start by recalling  the following apriori estimates.

\begin{lemma}\label{l:apriori.esitmate.backward}
Let $\gamma \in L^2_{\Fcal}(\Omega \times [0,T];H)$ be a progressively measurable process and $\xi \in L^{2}_{\Fcal_T}(\Omega; H)$. Then the mild solution to
\begin{equation*}
-dp(t) = A'p(t)dt + \gamma(t)dt - q(t)dW(t), \qquad \quad p(T) = \xi
\end{equation*} 
satisfies the following a priori estimate $\forall t \in [0,T]$, for every $\beta >0$:
\begin{equation}\label{stimaclassica}
\E\int_0^T \left(|p(t)|^2_H + |q(t)|^2_{ \mathcal{L}_2(K;H)}	 \right)e^{\beta t}dt \leq C \left[  \E[ |\xi|_H^2e^{\beta T}] + \frac{2}{\beta} \E\int_0^T |\gamma(t)|^2_He^{\beta t}dt  \right] 
\end{equation}
for some constant $C$ depending on $T$ and the constants appearing in Hypothesis 1.
\end{lemma}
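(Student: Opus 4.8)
The plan is to prove the estimate by an energy method, applying It\^o's formula to $t\mapsto e^{\beta t}|p(t)|_H^2$ along the backward equation. The It\^o correction term is the whole point of the computation: it is what makes the weighted $L^2$-norm of $q$ appear on the left-hand side, and there is no way to recover control of $q$ from the mild representation of $p$ alone. The obstruction to applying It\^o directly is that a mild solution $p(t)$ generally does not take values in $D(A')$. I would therefore regularize by the Yosida approximations of $A'$, writing $R_n=n(nI-A')^{-1}$, $A'_n=A'R_n$, and working with $p_n:=R_np$, $q_n:=R_nq$, which satisfy the strong equation $-dp_n=(A'_np_n+R_n\gamma)\,dt-q_n\,dW$ with terminal value $R_n\xi$ and to which the classical It\^o formula applies, since $A'_n$ is bounded.

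Next I would apply It\^o to $e^{\beta s}|p_n(s)|_H^2$ on $[t,T]$ and take expectations; as $A'_n$ is bounded the stochastic integral is a genuine martingale and drops out, yielding the energy identity
\begin{equation*}
\E[e^{\beta t}|p_n(t)|_H^2]+\E\int_t^T e^{\beta s}\Big(\beta|p_n|_H^2+|q_n|^2_{\mathcal{L}_2(K;H)}-2\langle A'_np_n,p_n\rangle_H\Big)\,ds=\E[e^{\beta T}|R_n\xi|_H^2]+2\,\E\int_t^T e^{\beta s}\langle p_n,R_n\gamma\rangle_H\,ds.
\end{equation*}
Because $S(t)'$ obeys the same bound $|S(t)'|_{\Lcal(H)}\le Me^{\omega t}$ as $S(t)$ (Hypothesis H1), after replacing the inner product by an equivalent one under which the semigroup becomes a quasi-contraction (the device that absorbs the constant $M>1$) the Yosida approximations are uniformly quasi-dissipative, $\langle A'_nx,x\rangle_H\le\omega_n|x|_H^2$ with $\omega_n\to\omega$; I would move the $A'_n$-term to the right at the cost of a term $2\omega_n\,\E\int_t^T e^{\beta s}|p_n|_H^2\,ds$.

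The two forcing contributions are then handled by elementary inequalities. For the cross term I would use Young's inequality $2\langle p_n,R_n\gamma\rangle_H\le\frac{\beta}{2}|p_n|_H^2+\frac{2}{\beta}|R_n\gamma|_H^2$, so that the $\frac{\beta}{2}|p_n|^2$ it produces is absorbed by the $\beta|p_n|^2$ already present in the identity; what survives on the left is a positive multiple of the weighted integrals of $|p_n|^2$ and $|q_n|^2$ (the nonnegative pointwise term $\E[e^{\beta t}|p_n(t)|^2]$ being simply discarded), while on the right we are left with the boundary term $\E[e^{\beta T}|\xi|_H^2]$ and the $\frac{2}{\beta}$-weighted integral of $|\gamma|^2$. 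The residual $\omega_n\,\E\int e^{\beta s}|p_n|^2\,ds$ term is controlled by Gronwall's lemma, which is precisely where the dependence of the constant $C$ on $T$, $M$ and $\omega$ enters.

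Finally I would pass to the limit $n\to\infty$: using $R_n\to I$ strongly together with the $n$-uniform bounds just obtained, dominated convergence handles the right-hand side and Fatou's lemma (lower semicontinuity of the $L^2$-norms) handles the left, so the estimate survives for the pair $(p,q)$ itself. The algebraic manipulations — the cancellation of the $\beta|p_n|^2$ terms and the bookkeeping of constants — are routine; the step I expect to demand the most care is the rigorous justification of It\^o's formula in the mild setting, that is, the Yosida regularization together with the limit passage, and the treatment of the non-contractive constant $M$ through the equivalent-norm argument.
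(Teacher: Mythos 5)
There is a genuine gap, and it sits exactly at the step you flagged as a mere ``device'': the passage to an equivalent inner product making $S(t)'$ quasi-contractive. Under Hypothesis 1 (H1) the semigroup only satisfies $|S(t)|_{\Lcal(H)}\le Me^{\omega t}$ with $M$ possibly larger than $1$, and your energy method hinges on the quasi-dissipativity estimate $\<A'_n x,x\>\le \omega_n|x|^2$ holding for \emph{some} inner product inducing an equivalent norm. The classical renorming $|x|'=\sup_{t\ge 0}e^{-\omega t}|S(t)x|$ does make the semigroup quasi-contractive, but the resulting norm is in general not Hilbertian, and It\^o's formula for $|p_n|^2$ — which is precisely what produces the $|q_n|^2_{\Lcal_2(K;H)}$ term on the left-hand side through the quadratic variation — requires the Hilbert structure. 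Moreover, it is known that a general $C_0$-semigroup on a Hilbert space need \emph{not} be similar to a (quasi-)contraction semigroup (Packel's counterexample to the similarity problem for semigroups, in the circle of ideas around the Sz.-Nagy--Foguel--Pisier similarity problem), so no equivalent \emph{inner product} with the required dissipativity need exist. Hence under (H1) alone the inequality $\<A'_n p_n,p_n\>\le\omega_n|p_n|^2$ is unjustified, and with it the $2\omega_n$-absorption and the Gronwall constant; your argument is correct only under the strictly stronger standing assumption that $A$ is quasi-dissipative (equivalently $M=1$ after rescaling). There is also a small notational slip ($A'_np_n=A'R_n^2p$ is not the drift of $R_np$; one should write $A'p_n$, which is meaningful since $p_n\in D(A')$), but that is cosmetic compared with the renorming issue.

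For comparison, the paper does not prove the lemma by an energy method at all: it invokes \cite{hu1991adapted}, Proposition 2.1, and \cite{guatteri2005backward}, Theorem 4.4, whose argument is representation-based and works for an arbitrary $C_0$-semigroup. From the mild formula one has $p(t)=\E^{\Fcal_t}\bigl[S(T-t)'\xi+\int_t^T S(s-t)'\gamma(s)\,ds\bigr]$, and Cauchy--Schwarz with the exponential weight yields the $p$-estimate with the factor $2/\beta$. The process $q$ — contrary to your claim that it cannot be controlled from the mild representation — is controlled precisely through the martingale representation kernels: writing $\E^{\Fcal_t}\xi=\xi-\int_t^T K_\xi(\theta)\,dW(\theta)$ and $\E^{\Fcal_t}\gamma(s)=\gamma(s)-\int_t^s\tilde K(s,\theta)\,dW(\theta)$, one identifies $q(t)=S(T-t)'K_\xi(t)+\int_t^T S(s-t)'\tilde K(s,t)\,ds$, and the It\^o isometry bounds $\E\int_0^s|\tilde K(s,\theta)|^2\,d\theta\le 4\,\E|\gamma(s)|^2$ give the weighted $q$-estimate with a constant $C=C(T,M,\omega)$, with no dissipativity used anywhere; this is exactly the scheme the paper replays in the proof of the lemma for equation \eqref{ABSDEtrascinata-approx-forz}, see \eqref{repreFormula}. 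To prove the lemma as stated, under (H1) as stated, you should switch to this representation argument; your Yosida/It\^o route can be salvaged only by adding a quasi-dissipativity hypothesis on $A$.
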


\begin{proof}
The solution to this equation was proved  in \cite{hu1991adapted}, Proposition 2.1, see also \cite {guatteri2005backward} for the estimates with exponential weights, Theorem 4.4 in particular  estimate $(4.13)$ and $(4.14)$. 
\end{proof}

Suppose first that the final cost is independent of the past, which is to say that in Hypothesis \ref{ipotesi} $\mu_h$ coincides with $\delta_T$,
the Dirac
measure in $T$. If the final cost is independent on the past, in the adjoint equation (\ref{ABSDEtrascinataMild}) the terminal condition is given at the final time $T$
and it is equal to $ D_xh(X(T))\in L^2_{\Fcal_T}(\Omega;H)$. Let us consider the following anticipated backward SPDE (ABSDE):
\begin{align}\label{eq:ABSPDEDiff}
-dp(t)& = A' p(t)\, dt  + \E^{\Fcal_t}\int_{-d}^0 D_xf(X(t),u(t-\theta))'p(t-\theta)\mu_f(d\theta)d t \nonumber\\ 
\
&+\E^{\Fcal_t} \int_{-d}^0 D_xl(X(t+\theta),u(t))\mu_l(d\theta)dt - q(t)\,dW(t)\\ 
p(T)&= D_xh(X(T)) \nonumber \\
p(t)&=0,  \ \text{for all } t \in ]T,T+d], \P-\text{a.s.},\   q(t)=0 \  \text{a.e. } t \in ]T,T+d], \P-\text{a.s.}.\nonumber  
\end{align}
We extend the notion of mild solutions to the anticipating case.
\begin{definition}
We say that equation \eqref{eq:ABSPDEDiff} admits a mild solution if there is a couple of
processes
$$(p,q) \in L^2_\mathcal{F}(\Omega\times [0,T+d];H)
\times L^2_\mathcal{F}(\Omega\times [0,T+d]; \mathcal{L}_2(K;H))$$
that solves the following equation, for all $t \in [0,T]$:
\begin{align}\label{eq:ABSPDE}
p(t) &= S(T-t)'D_xh(X(T))+ \int_t^T S(s-t)' \E^{\Fcal_s}\int_{-d}^0 D_xf(X(s),u(s-\theta))'p(s-\theta)\mu_f(d\theta)ds \\
&+ \int_t^T S(s-t)'\int_{-d}^0 D_xl(X(s+\theta),u(s))\mu_l(d\theta)ds - \int_t^T S(t-s)'q(s)dW(s)\nonumber
\end{align}
And $p(t)=0$ for all $t \in ]T,T+d], \P-$a.s., $ q(t)=0$ a.e. $t \in ]T,T+d], \P-$a.s..
\end{definition}
\begin{theorem}
Suppose that Hypothesis \ref{ipotesi} holds, with $\mu_h=\delta_T$. Then the anticipated backward SPDE \eqref{eq:ABSPDE} admits a unique mild solution 
$(p,q) \in  L^2_{\Fcal}(\Omega \times [0,T+d];H) \times  L^2_{\Fcal}(\Omega \times [0,T+d];\Lcal_2(K;H))$. 

Moreover we have that, for $\beta$ large enough:
\begin{equation}
\label{stimaantic}
\E\int_0^{T+d} 
\left( \frac{1}{2}|{p}(t)|^2_H + |q(t)|^2_{ \mathcal{L}_2(K;H)}	 \right)
e^{\beta t}dt \leq  C\Big[1+ \E\!\!\sup_{t\in [-d,T]}\!\!|X(t)|^{2(j\vee k)}  _H+ 
  \E \int_0^T\!\! |u(t)|^2 _U\, dt\Big]
\end{equation}
for some constant $C$ depending on $\beta, T$ and the constants appearing in Hypothesis 1.
\end{theorem}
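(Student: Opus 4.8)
The plan is to solve \eqref{eq:ABSPDE} by a fixed point argument in which the anticipating term is frozen and treated as a known, \emph{adapted} source, so that the a priori estimate of Lemma \ref{l:apriori.esitmate.backward} applies to the resulting standard backward equation. Concretely, I would work in the space $\mathcal{V}_\beta$ of processes $\tilde p \in L^2_{\Fcal}(\Omega\times[0,T+d];H)$ vanishing on $]T,T+d]$, equipped with the weighted norm $\|\tilde p\|_\beta^2 := \E\int_0^{T+d}|\tilde p(t)|_H^2 e^{\beta t}\,dt$, which is a Banach space and a closed subspace of the unweighted one. Given $\tilde p \in \mathcal{V}_\beta$, define
\[
\gamma(s) := \E^{\Fcal_s}\!\int_{-d}^0 D_xf(X(s),u(s-\theta))'\tilde p(s-\theta)\mu_f(d\theta) + \E^{\Fcal_s}\!\int_{-d}^0 D_xl(X(s+\theta),u(s))\mu_l(d\theta),
\]
where the conditional expectation is precisely what makes $\gamma$ progressively measurable despite the anticipation. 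Using the boundedness of $D_xf$ from H3, the growth of $D_xl$ from H5, the finite total variation of $\mu_f,\mu_l$, and the moment bound \eqref{stimafor} together with admissibility of $u$, one checks that $\gamma\in L^2_{\Fcal}(\Omega\times[0,T];H)$ and that $D_xh(X(T))\in L^2_{\Fcal_T}(\Omega;H)$ by H6. Lemma \ref{l:apriori.esitmate.backward} then yields a unique mild solution $(p,q)$ of $-dp = A'p\,dt + \gamma\,dt - q\,dW$ with $p(T) = D_xh(X(T))$, and we set $\Phi(\tilde p):=p$, extended by $0$ on $]T,T+d]$, so that $\Phi$ maps $\mathcal{V}_\beta$ into itself.

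The heart of the argument is showing that $\Phi$ is a contraction on $\mathcal{V}_\beta$ for $\beta$ large. For two inputs $\tilde p_1,\tilde p_2$, the difference $p_1-p_2$ solves the backward equation with zero terminal datum and source $\gamma_1-\gamma_2$, whose $l$-part cancels, so only the $f$-part survives. By conditional Jensen and the Cauchy--Schwarz inequality with respect to the finite measure $|\mu_f|$, together with $\|D_xf\|\leq C_1$, I would bound
\[
|\gamma_1(s)-\gamma_2(s)|_H^2 \leq C_1^2\,|\mu_f|([-d,0])\,\E^{\Fcal_s}\!\int_{-d}^0|\tilde p_1(s-\theta)-\tilde p_2(s-\theta)|_H^2\,|\mu_f|(d\theta).
\]
The decisive step is the change of variables $r=s-\theta$: since $\theta\le 0$ one has $e^{\beta s}=e^{\beta r}e^{\beta\theta}\le e^{\beta r}$, which transfers the weighted integral over $[0,T]$ to one over $[0,T+d]$ with no blow-up, giving $\E\int_0^T|\gamma_1(s)-\gamma_2(s)|_H^2 e^{\beta s}\,ds \leq C_1^2\,|\mu_f|([-d,0])^2\,\|\tilde p_1-\tilde p_2\|_\beta^2$. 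Feeding this into \eqref{stimaclassica} (with $\xi=0$) produces $\|\Phi(\tilde p_1)-\Phi(\tilde p_2)\|_\beta^2 \leq \tfrac{2C C_1^2|\mu_f|([-d,0])^2}{\beta}\|\tilde p_1-\tilde p_2\|_\beta^2$, so choosing $\beta$ large makes the constant strictly less than $1$. The Banach fixed point theorem then delivers a unique $p$, and the associated $q$, solving \eqref{eq:ABSPDE}.

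Finally, the estimate \eqref{stimaantic} follows by applying \eqref{stimaclassica} directly to the fixed point $(p,q)$ and splitting $|\gamma|^2\le 2|\gamma_f|^2+2|\gamma_l|^2$. The $f$-part is controlled exactly as above by $\tfrac{C}{\beta}\E\int_0^{T+d}|p(t)|_H^2 e^{\beta t}\,dt$ and, for $\beta$ large, absorbed into the left-hand side; since only the $|p|^2$-term is reduced while the $|q|^2$-term is untouched, this is precisely the origin of the factor $\tfrac12$ in front of $|p|^2$. The remaining terms — the terminal contribution $\E[|D_xh(X(T))|_H^2 e^{\beta T}]$ bounded via H6 by $1+\E|X(T)|_H^{2k}$, and the $l$-part bounded via H5 by $1+\E\sup_t|X(t)|_H^{2j}+\E\int_0^T|u|_U^2\,dt$ — are estimated using \eqref{stimafor} and admissibility, the two growth exponents combining into $2(j\vee k)$. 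The main obstacle is exactly the treatment of the anticipating term: one must verify that the backward-in-time conditional expectation keeps the source adapted, and that the exponential weight with $\beta$ large absorbs the anticipation, which is the infinite-dimensional, mild-formulation analogue of the classical weight trick for anticipated BSDEs.
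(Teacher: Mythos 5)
Your proposal is correct and follows essentially the same route as the paper: freeze the anticipating term (made adapted by the conditional expectation $\E^{\Fcal_s}$), apply the a priori estimate of Lemma \ref{l:apriori.esitmate.backward} with exponential weight $e^{\beta t}$, use the change of variables $r=s-\theta$ with $e^{\beta\theta}\leq 1$ to absorb the anticipation, and take $\beta$ large to get a contraction, then derive \eqref{stimaantic} by the same absorption argument that produces the factor $\tfrac12$. The only cosmetic difference is that you contract in $p$ alone rather than in pairs $(p,q)$ as the paper does, which is harmless since the source $\gamma$ does not depend on $q$.
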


\begin{proof}
In order to prove the existence of a solution to \eqref{eq:ABSPDE} we want to construct a contraction map, as in \cite{PengYang} 
in the finite dimensional case, but with some differences due to the infinite dimensional setting. Given a pair of processes 
$(y,z) \in  L^2_{\Fcal}(\Omega \times [0,T+d];H) \times  L^2_{\Fcal}(\Omega \times [0,T+d];\Lcal_2(K;H))$, we define the contraction map
$\Gamma: L^2_{\Fcal}(\Omega \times [0,T+d];H) \times  L^2_{\Fcal}(\Omega \times [0,T+d];\Lcal_2(K;H))
\to L^2_{\Fcal}(\Omega \times [0,T+d];H) \times  L^2_{\Fcal}(\Omega \times [0,T+d];\Lcal_2(K;H))$
using the mild formulation:
\begin{equation*}
\begin{system}
p(t) = S(T-t)' D_xh(X(T)) + \dis\int_t^T S(s-t)' \gamma(s,y_s)ds - \dis\int_t^T S(t-s)'q(s)dW(s)\\
p(t) = 0  \qquad   \forall  t \in ]T,T+d], \P- a.s.\\
q(t) = 0 \qquad a.e. \ t \in ]T,T+d], \P- a.s.\
\end{system}
\end{equation*}
where 
\begin{align*}\gamma(s, y_s)=  \E^{\Fcal_s}\int_{-d}^0 D_xf(X(s),u(s-\theta))'y(s-\theta)\mu_f(d\theta)   
+\int_{-d}^0 D_xl(X(s+\theta),u(s))\mu_l(d\theta)
\end{align*}
that belongs to $L^2_{\Fcal}(\Omega \times [0,T+d];H)$,  thanks to the assumptions on $f$ and $l$ and \eqref{stimafor}.
We define the map $\Gamma$ from $L^2_{\Fcal}(\Omega \times [0,T+d];H) \times  L^2_{\Fcal}(\Omega \times [0,T+d];\Lcal_2(K;H))$ to itself, such that $\Gamma(y,z) = (p,q)$ and $\Gamma(y',z') = (p',q')$, where $y,y'$ and $z,z'$ are arbitrary elements of the space just defined. Denote their differences by
\begin{equation*}
(\hat{y},\hat{z}) = (y-y', z-z'), \qquad (\hat{p},\hat{q}) = (p-p', q-q')  
\end{equation*} 
Using estimate in Lemma \ref{l:apriori.esitmate.backward}, according to the definition of $p,p'$, $q,q'$ we can say that
\begin{equation*}
\begin{split}
\E\int_0^{T+d} \left( |\hat{p}(t)|^2_H + |\hat{q}(t)|^2_{\Lcal_2(K;H)}\right)e^{\beta t}dt &\leq \frac{2}{\beta}
\E\int_0^T |\gamma(t,y_t) - \gamma(t,y'_t)|^2e^{\beta t}dt
\end{split}
\end{equation*}
The right hand side of the estimate can be rewritten using the special form of $\gamma$, then
\begin{equation*}
\begin{split}
& \E\int_0^{T+d} \left( |\hat{p}(t)|^2_H + |\hat{q}(t)|^2_{\Lcal_2(K;H)}\right)e^{\beta t}dt 
\leq \frac{2}{\beta} \E\int_0^T \big|\int_{-d}^0 D_xf(X(t),u(t-\theta))' \hat{y}(t - \theta)\mu_f(d\theta)\big|^2e^{\beta t}dt \\
&\leq \frac{2C_1}{\beta} \E\int_0^T \int_{-d}^0 |\hat{y}(t - \theta)|^2_H|\mu_f|(d\theta)e^{\beta t}dt 
\end{split}
\end{equation*}
After a change of variable $s = t- \theta$ we obtain 
\begin{equation}
\begin{split}
 \E\int_0^T \int_{-d}^0|\hat{y}(t-\theta)|^2_H|\mu_f|(d\theta)e^{\beta t}dt &
 \leq  \E\int_{0}^d \int_{-s}^0 |\hat{y}(s)|^2_He^{\beta (s+\theta)}|\mu_f|(d\theta)ds \\
&+ \E\int_{d}^{T} \int_{-d}^0 |\hat{y}(s)|^2_He^{\beta (s+\theta)}|\mu_f|(d\theta)ds  \\
&+ \E\int_{T}^{T+d} \int_{-d }^{0 \wedge (T-s)} |\hat{y}(s)|^2_He^{\beta (s+\theta)}|\mu_f|(d\theta)ds
\end{split}
\end{equation}
So that, there exists a constant $C$, that depends on $T$, such that
\begin{equation}
\frac{2}{\beta} \E\int_0^T \int_{-d}^0 |\hat{y}(t - \theta)|^2_H|\mu_f|(d\theta)e^{\beta t}dt \leq \frac{2C}{\beta}\E\int_0^{T+d}|\hat{y}(t)|_H^2e^{\beta t}dt
\end{equation}
We can choose $\beta$ such that the map $\Gamma$ is a strict contraction. By the Fixed Point Theorem we get existence and uniqueness of a mild solution.
Let us come to the estimate. Let $(p,q)$ be the fixed point solution, hence by \eqref{stimaclassica} we have, for a $\beta$ large enough: 
\begin{align*}
&\E\int_0^{T+d} \left( \frac{1}{2}|{p}(t)|^2_H + |q(t)|^2_{\Lcal_2(K;H)} \right)e^{\beta t}dt \\
&\leq  C \left[ 1+ e^{\beta T}  \E|D_xh(X(T))|^2_H+ \frac{2 e^{\beta T}(T+d)}{\beta}  \E\!\!\sup_{t\in [-d,T]}\!\!|X(t)|^{2j}
+  \frac{2 e^{\beta T}}{\beta} \E \int_0^T\!\! |u(t)|^2 \, dt  \right]\\  
& \leq   C\left[1+ e^{\beta T} \Big[ C_4 + \frac{2 (T+d)}{\beta} \Big] \E\!\!\sup_{t\in [-d,T]}\!\!|X(t)|^{2(j\vee k)} +  \frac{2 e^{\beta T}}{\beta} \E \int_0^T\!\! |u(t)|^2 \, dt. \right]
\end{align*}
Notice that the constant $C$  does not depend on $\beta$, since it comes from \eqref{stimaclassica}.
\end{proof}
 
In the following, we build an approximating ABSDE whose differential form  still makes sense and can approximate equation \eqref{ABSDEtrascinataMild}. Such approximating ABSDE
is obtained by a suitable approximation of $\mu_h$: the construction of this sequence of approximating measures $(\mu^n_h)_{n\geq 1}$
is given in the following Lemma.
\begin{lemma}\label{lemma:approx-measure}
 Let $\bar\mu$ be a finite  regular measure on $[T-d,T]$, such that $\bar \mu (\left\lbrace T \right\rbrace)=0$.
 There exists a sequence $(\bar\mu^n)_{n\geq 1}$ of finite regular measure on $[T-d,T]$, absolutely continuous with respect to
 $\lambda_{[T-d,T]}$, the Lebesgue measure on $[T-d,T]$, such that
 \begin{equation}\label{conv-a-mu}
  \bar\mu=w^*-\lim_{n\rightarrow\infty}\bar \mu^n,
 \end{equation}
that is for every $\phi\in C_b([T-d,T];\R)$
\begin{equation}\label{conv-a-mu:expl}
  \int_{T-d}^T \phi\,d\bar\mu=\lim_{n\rightarrow\infty}\int_{T-d}^T \phi\, d\bar\mu^n
 \end{equation}
\end{lemma}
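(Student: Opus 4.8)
The plan is to obtain $\bar\mu^n$ as a one-sided mollification of $\bar\mu$, using the hypothesis $\bar\mu(\{T\})=0$ only to control the behaviour at the right endpoint. First I would fix a kernel $\rho\in C(\R)$ with $\rho\ge 0$, $\opn{supp}\rho\subseteq[0,1]$ and $\int_\R\rho\,dt=1$, and set $\rho_n(u):=n\rho(nu)$, so that $\opn{supp}\rho_n\subseteq[0,1/n]$ and $\int_\R\rho_n=1$. Extending $\bar\mu$ by zero to a measure on $\R$, I would define the density
\[
g_n(t):=\int_{[T-d,T]}\rho_n(t-s)\,\bar\mu(ds), \qquad t\in\R,
\]
and take $\bar\mu^n$ to be the measure on $[T-d,T]$ with density $g_n|_{[T-d,T]}$ with respect to $\lambda_{[T-d,T]}$. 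By construction $\bar\mu^n$ is absolutely continuous, and a Fubini estimate together with $\int_\R\rho_n=1$ gives $|\bar\mu^n|([T-d,T])\le|\bar\mu|([T-d,T])<\infty$, so each $\bar\mu^n$ is an admissible finite regular measure.

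The key design choice is to take the kernel supported in $[0,1/n]$ rather than a symmetric one. With this choice $\opn{supp}g_n\subseteq[T-d,T+1/n]$, since $\rho_n(t-s)\ne 0$ forces $s\in[t-1/n,t]$, which meets $[T-d,T]$ only for $t\ge T-d$. Consequently no mass leaks to the left of $T-d$, and the sole error produced by restricting $g_n$ to $[T-d,T]$ is the mass pushed into $(T,T+1/n]$, that is, a loss localised at the right endpoint.

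To verify \eqref{conv-a-mu:expl}, I would fix $\phi\in C_b([T-d,T];\R)$ and apply Fubini's theorem to write
\[
\int_{T-d}^T\phi\,d\bar\mu^n=\int_{[T-d,T]}h_n(s)\,\bar\mu(ds), \qquad h_n(s):=\int_{T-d}^T\phi(t)\rho_n(t-s)\,dt.
\]
A change of variables gives $|h_n(s)|\le\|\phi\|_\infty$ for every $s$, and for $s\in[T-d,T)$ one has, for $n$ large enough, $h_n(s)=\int_0^{1/n}\phi(s+u)\rho_n(u)\,du\to\phi(s)$ by continuity of $\phi$ and $\int_0^{1/n}\rho_n=1$. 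I would then invoke dominated convergence with respect to the finite total variation measure $|\bar\mu|$, the constant $\|\phi\|_\infty$ serving as dominating function, to conclude $\int h_n\,d\bar\mu\to\int_{[T-d,T]}\phi\,d\bar\mu$, which is exactly \eqref{conv-a-mu:expl}.

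The only delicate point, and the place where the hypothesis enters, is the endpoint $s=T$: there the mollified mass is sent into $(T,T+1/n]$ and lost under the restriction, so $h_n(T)$ does not converge to $\phi(T)$. This is harmless precisely because $|\bar\mu|(\{T\})=|\bar\mu(\{T\})|=0$, so $\{T\}$ is $|\bar\mu|$-null and does not affect the dominated convergence argument. Thus the main (and essentially only) obstacle is the endpoint behaviour, which the one-sided kernel and the assumption $\bar\mu(\{T\})=0$ together neutralise; the remaining steps are the routine Fubini and change-of-variables computations indicated above.
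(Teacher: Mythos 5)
Your proof is correct, but it takes a genuinely different route from the paper. The paper proceeds structurally: it writes $\bar\mu=\bar\mu^+-\bar\mu^-$ via the Jordan decomposition, normalizes each piece to a probability measure, invokes the Krein--Milman theorem to approximate these in the weak-$*$ sense by convex combinations of Dirac measures, and finally smooths each Dirac $\delta_{x_k}$ by a uniform density on a small right-hand interval $[x_k,x_k+j]$ --- so the absolutely continuous approximants arise through a two-stage limit (first discrete measures, then mollified Diracs). You instead mollify $\bar\mu$ directly with a one-sided kernel $\rho_n$ supported in $[0,1/n]$, and verify weak-$*$ convergence by Fubini plus dominated convergence against $|\bar\mu|$; your identity $|\bar\mu|(\{T\})=|\bar\mu(\{T\})|$ for a singleton is exact, so the hypothesis $\bar\mu(\{T\})=0$ enters exactly where you say it does. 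Your argument buys several things: it is one-step and elementary (no Krein--Milman, no separate treatment of the positive and negative parts beyond using $|\bar\mu|$ as a dominating measure), it produces explicit densities $g_n$ with the clean bound $|\bar\mu^n|([T-d,T])\leq|\bar\mu|([T-d,T])$, and it handles the right endpoint with visible care --- note that the paper's smoothing of a Dirac at $x_k$ near $T$ by a density on $[x_k,x_k+j]$ raises the same leakage-past-$T$ issue you isolate, and the paper does not address it explicitly, nor does it spell out the diagonalization over its two limits. What the paper's route buys in exchange is structural transparency: the intermediate approximation by discrete measures connects directly to the classes of measures (Dirac combinations, discrete, absolutely continuous) exhibited in its examples, and the Krein--Milman argument generalizes verbatim to settings where explicit convolution is less natural. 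Both proofs are valid; yours is arguably tighter at the one delicate point of the lemma.
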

\begin{proof} We notice that, denoted by $\calm$ the set of  regular probability measures on a bounded and closed interval $I$, 
then $\text{Extr}\calm=\left\lbrace \delta_x:x\in I\right\rbrace$. 
Moreover, by the Krein-Milman Theorem it turns out that $\calm=\bar{co}\text{Extr}\calm=\bar{co}\left\lbrace \delta_x:x\in I\right\rbrace$,
and so any probability measure in $\calm$ can be approximated by a linear convex combination of $\delta_x,\, x\in I$.

\noindent Our aim is to approximate a finite  regular measure $\bar\mu$
on the interval $I=[T-d,T]$. To this aim, there exist (at least) two positive, finite,  regular measures $\bar\mu^+$
and $\bar\mu^-$ such that $\forall\,A \in \calb([T-d,T])$,
$$
\bar\mu(A)=\bar\mu^+(A)-\bar\mu^-(A).
$$
Since  $\bar\mu^+$ and $\bar\mu^-$ are positive, finite,  regular measures on $[T-d,T]$,
it is possible to associate to  $\bar\mu^+$ and $\bar\mu^-$ two regular probability measures $\P^+ $ and $\P^-$ such that
\[
 \P^+=\frac{\bar\mu^+}{\bar\mu^+([T-d,T])},\;\P^-=\frac{\bar\mu^-}{\bar\mu^-([T-d,T])}.
\]
By what pointed out at the beginning of the proof, each of the two regular probability measures $\P^+ $ and $\P^-$
can be approximated by a linear convex combination of $\delta_x,\, x\in I$, namely
\begin{align*}
 &\bar\mu^+=w^*-\lim_{n\rightarrow\infty}\bar\mu^+([T-d,T])\sum_{k=1}^n\alpha_k\delta_{x_k} ,\; x_k\in I, \alpha_k\geq 0,\,\sum_{k=1}^n\alpha_k=1;\\ \nonumber
 &\bar\mu^-=w^*-\lim_{n\rightarrow\infty}\bar\mu^-([T-d,T]) \sum_{k=1}^n\beta_k\delta_{y_k},\; y_k\in I, \beta_k\geq 0,\,\sum_{k=1}^n\beta_k=1;
\end{align*}
and so 
\begin{equation*}
 \bar\mu=w^*-\lim_{n\rightarrow\infty}\left[\bar\mu^+([T-d,T])\sum_{k=1}^n\alpha_k\delta_{x_k}-
 \bar\mu^-([T-d,T])\sum_{k=1}^n\beta_k\delta_{y_k}\right],
 \end{equation*}
 where
\begin{equation*}
 x_k,\,y_k\in I, \alpha_k\geq 0,\,\sum_{k=1}^n\alpha_k=1 ,\, \beta_k\geq 0,\,\sum_{k=1}^n\beta_k=1.
\end{equation*}
This gives and approximation of $\bar \mu $ by means of discrete measures.
Now each $\delta$ measure can be approximated by means
of a measure which is absolutely continuous with respect to the Lebesgue measure.
Namely let us
set
$$
m_n=\min_{k=1,...,n-1}\left( x_{k+1}-x_k  \right),
$$
and let us consider the meausures $\lambda_{k,j},\, j<\frac{m_n}{2}$, absolutely continuous with respect to $\lambda$,
such that $\forall A \in \calb(I)$
$$
\lambda_{k,j}(A)=\int_A \frac{1}{j} \chi_{[x_k,x_k+j]}(x)\,dx.
$$
It turns out that, for each $k$,
\[
 \delta_k=w^*-\lim_{j\rightarrow\infty}\lambda_{k,j}.
\]
Moreover for every function $\xi\in C_b(I)$, by the mean value Theorem, with
$x_{k^*}\in[x_k,x_k+j],\, j<\frac{m_n}{2}$
\begin{align*}
 &\left|\sum_{k=1}^n\alpha_k\left[\int_I \xi(x)\,d\delta_{x_k}(x) - \int_I \xi(x)\lambda_{k,j}(x)(dx)\right]\right|
 =\left|\sum_{k=1}^n\alpha_k\left(\xi(x_k)-\xi(x_{k^*})\right)\right|\\ \nonumber
  &\qquad\leq\sum_{k=1}^n\alpha_k\omega\left(\vert x_k-x_{k^*}\vert\right)\leq \omega\left(m_n\right)\sum_{k=1}^n\alpha_k = \omega\left({m_n}\right)\rightarrow 0 \text{ as }n\rightarrow\infty, \nonumber
\end{align*}
where $\omega(\cdot)$ is the modulus of continuity of $\xi$.
The desired approximation result holds true.
\end{proof}
\begin{remark}
 Notice that the aim is the approximation of the measure $\mu_h$ in equation (\ref{ABSDEtrascinataMild}), in order to give sense
 to the differential form of such equation, and for this measure it can be $\mu_h(\left\lbrace T\right\rbrace)\neq 0 $. To the measure
 $\mu_h$ we can associate another measure $\bar \mu_h$ such that for any $A\in\calb ([T-d,T])$
 \begin{equation}\label{bar-mu}
   \bar\mu_h(A)=\mu_h(A\backslash\left\lbrace T\right\rbrace).
 \end{equation}
 Roughly speaking the measure $\bar \mu_h$ is obtained by the original measure $\mu_h$, by subtracting to $\mu_h$ its mass in
 $\left\lbrace T\right\rbrace$.
 Lemma \ref{lemma:approx-measure} ensures that there exists a sequence of measures $(\bar\mu^n_h)_{n\geq 1}$, on $[T-d,T]$,
 which are absolutely continuous with respect to the Lebesgue measure on $[T-d,T]$ and that are $w^*$-convergent to $\bar\mu_h$.
\end{remark}
We can define the following regular approximations of equation \eqref{ABSDEtrascinataMild} by approximating 
$\bar\mu_h $ obtained by $\mu_h$ in (\ref{bar-mu}).
We first introduce a preliminary equation
\begin{equation}\label{ABSDEtrascinata-approx-forz}
\begin{split}
p^n(t) &= \int_{t \vee (T-d)}^T\!\! S\left(s-t \right) ' D_x h(X(s)) \bar{\mu}^n_h(ds) \\
&+ \int_t^T S(s-t)' \Lambda(s)\, ds + \int_t^T S(s-t)' \, q^n(s)\,dW_s, \qquad t \in [0,T]\\
\end{split}
\end{equation}
where $\Lambda \in L^2_{\Fcal}(\Omega \times [0,T];H)$.
We can prove that:
\begin{lemma}
Assume Hypothesis 1 and that $\Lambda \in L^2_{\Fcal}(\Omega \times [0,T];H)$.
Then there exists a unique mild solution to \eqref{ABSDEtrascinata-approx-forz}. Moreover there exists a  positive constant $C$, depending only on $T$ and constants appearing in hypothesis 1, such that
for every $\beta >0$:
\begin{align}\label{stimaInter}
&\E\int_0^{T} \left( \frac{1}{2}|{p^n}(t)|^2_H + |q^n(t)|^2_{\mathcal{L}_2(K;H)}	 \right)e^{\beta t}dt &\\ \nonumber  &\leq  C\Big[\E \int_0^Te^{\beta t}\Big| \int_{t \vee (T-d)}^T S(s-t )' D_x h(X(s)) \bar{\mu}^n_h(ds)\Big|_H^2  \, dt \\ \nonumber&+ \frac{2}{\beta} \E\int_0^T |\Lambda(t)|^2_He^{\beta t}dt  +\E \int_0^Te^{\beta t}\Big| \int_{t \vee (T-d)}^T S(s-t)' K(s,t)\bar{\mu}^n_h(ds)\Big|_{\mathcal{L}_2(K;H)}^2\,dt \Big]
\end{align}
where $K\in  L^2_{\Fcal}(\Omega \times [0,T] \times [0,T];\mathcal{L}_2(K;H)) $ is such that for every $s \geq t\geq 0$:
\begin{equation}\label{repreFormula}
\E^{\mathcal{F}_t} D_x h(X(s))= D_xh(X(s)) - \int_t^s K(s,\theta)\, dW_{\theta}, \quad d\P \times d\theta  \text{ a.s.}
\end{equation}
\end{lemma}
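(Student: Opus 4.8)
The strategy is to use the linearity of \eqref{ABSDEtrascinata-approx-forz} and to treat the terminal--type forcing built from $D_xh$ on a different footing from the genuine source $\Lambda$. For existence and uniqueness I would first observe that, $\bar{\mu}^n_h$ being absolutely continuous, we may write $\bar{\mu}^n_h(ds)=\rho_n(s)\,ds$; then
\[
\int_{t\vee(T-d)}^T S(s-t)'D_xh(X(s))\,\bar{\mu}^n_h(ds)=\int_t^T S(s-t)'\gamma^n_h(s)\,ds,
\]
with $\gamma^n_h(s):=\mathbf{1}_{[T-d,T]}(s)\,\rho_n(s)\,D_xh(X(s))$ progressively measurable and, by \eqref{stimafor} and (H6), in $L^2_\Fcal(\Omega\times[0,T];H)$. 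Hence \eqref{ABSDEtrascinata-approx-forz} is a standard linear backward SPDE with source $\gamma^n_h+\Lambda$ and null terminal value, so existence and uniqueness follow from the theory behind Lemma \ref{l:apriori.esitmate.backward}; uniqueness also follows directly, since the difference of two solutions solves the homogeneous equation and \eqref{stimaclassica} with $\xi=0$, $\gamma=0$ forces it to vanish.

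For the estimate I would \emph{not} use the density $\rho_n$, which degenerates as $n\to\infty$, but keep the measure in place. By linearity split $(p^n,q^n)=(p^n_1,q^n_1)+(p^n_2,q^n_2)$, where $(p^n_1,q^n_1)$ is the mild solution with source $\Lambda$ and no terminal forcing, and $(p^n_2,q^n_2)$ is driven only by the $D_xh$--term. For the first pair, Lemma \ref{l:apriori.esitmate.backward} with $\xi=0$ and $\gamma=\Lambda$ gives exactly $\frac{2}{\beta}\E\int_0^T|\Lambda(t)|^2_He^{\beta t}dt$, the second term on the right of \eqref{stimaInter}.

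The heart of the matter is the explicit description of $(p^n_2,q^n_2)$. Writing $\Xi^n(t):=\int_{t\vee(T-d)}^T S(s-t)'D_xh(X(s))\,\bar{\mu}^n_h(ds)\in L^2$, its mild equation reads $p^n_2(t)=\Xi^n(t)+\int_t^T S(s-t)'q^n_2(s)\,dW_s$. Taking $\E^{\Fcal_t}$ and using that the mild stochastic integral has null conditional expectation while $S(s-t)'$ is deterministic, I obtain $p^n_2(t)=\E^{\Fcal_t}\Xi^n(t)$. Consequently $\int_t^T S(s-t)'q^n_2(s)\,dW_s=\E^{\Fcal_t}\Xi^n(t)-\Xi^n(t)$, and inserting the representation formula \eqref{repreFormula} the right--hand side becomes $-\int_{t\vee(T-d)}^T S(s-t)'\big(\int_t^sK(s,\theta)\,dW_\theta\big)\bar{\mu}^n_h(ds)$. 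A stochastic Fubini argument together with the semigroup identity $S(s-t)'=S(\theta-t)'S(s-\theta)'$, valid for $s\ge\theta\ge t$, then identifies the integrand as $q^n_2(t)=-\int_{t\vee(T-d)}^T S(s-t)'K(s,t)\,\bar{\mu}^n_h(ds)$. At this point Jensen's inequality gives $\E\int_0^T|p^n_2(t)|^2_He^{\beta t}dt=\E\int_0^T|\E^{\Fcal_t}\Xi^n(t)|^2_He^{\beta t}dt\le\E\int_0^T|\Xi^n(t)|^2_He^{\beta t}dt$, the first term of \eqref{stimaInter}, while the explicit form of $q^n_2$ reproduces verbatim the third term. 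Summing the two contributions and using $|p^n|^2\le2|p^n_1|^2+2|p^n_2|^2$ and the analogous bound for $q^n$, with the numerical constants (including the factor $\tfrac12$ inherited from \eqref{stimaclassica}) absorbed into $C$, yields \eqref{stimaInter}.

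The step I expect to be the main obstacle is the rigorous identification of the martingale integrand $q^n_2$: one must justify the stochastic Fubini interchange of the It\^o integral with the integration against $\bar{\mu}^n_h$ (using $K\in L^2$ and the finite total variation of $\bar{\mu}^n_h$) and then verify that the candidate $(p^n_2,q^n_2)$ genuinely solves the mild equation. Since $S(t)'$ need not be invertible, this verification cannot be carried out by formally inverting the semigroup; it must be done forwards, by substituting the explicit $q^n_2$ back into the mild formulation and checking, through the semigroup composition rule, that the stochastic term reconstructs $\E^{\Fcal_t}\Xi^n(t)-\Xi^n(t)$.
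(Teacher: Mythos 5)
Your proof is correct, and its skeleton is the paper's: for existence and uniqueness you both exploit the absolute continuity of $\bar{\mu}^n_h$ to recast the measure term as an ordinary $L^2$ (indeed essentially bounded-density) forcing and invoke the theory behind Lemma \ref{l:apriori.esitmate.backward}, and for the estimate you both bound $p^n$ through $p^n(t)=\E^{\Fcal_t}p^n(t)$ plus Jensen/Cauchy--Schwarz and identify the martingale integrand through the representation \eqref{repreFormula}. Where you genuinely diverge is in the organization of the estimate: the paper treats the full equation at once and, citing \cite{hu1991adapted} and \cite{guatteri2005backward}, asserts the two-term formula $q^n(t)=\int_t^T S(s-t)'K(s,t)\bar{\mu}^n_h(ds)+\int_t^T S(s-t)'\tilde K(s,t)\,ds$, with $\tilde K$ the representation kernel of $\Lambda$ from \eqref{repreFormula1}; you instead split $(p^n,q^n)$ by linear superposition, dispose of the $\Lambda$-part entirely through \eqref{stimaclassica} with $\xi=0$ (so that $\tilde K$ never appears and the explicit $\frac{2}{\beta}$ factor comes out for free with a $\beta$-independent $C$), and derive the $K$-part identity from scratch. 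Your resolution of the identification problem is also the right one and is more explicit than the paper's appeal to \cite{hu1991adapted}: since $S(\theta-t)'$ is not invertible, one cannot read off $q^n_2$ by cancellation after the stochastic Fubini step, but verifying forwards that the candidate pair $\bigl(\E^{\Fcal_t}\Xi^n(t),\,-\int_{t\vee(T-d)}^T S(s-t)'K(s,t)\bar{\mu}^n_h(ds)\bigr)$ satisfies the mild equation and then invoking the uniqueness already established closes the argument. (The sign of your $q^n_2$ differs from the paper's displayed formula, but this traces to the $+$ sign in front of the stochastic integral in \eqref{ABSDEtrascinata-approx-forz} versus the $-$ convention used elsewhere in the paper, and is immaterial since only $|q^n|^2_{\mathcal{L}_2(K;H)}$ enters \eqref{stimaInter}.)
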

\begin{proof}
Let us notice first of all that $\bar{\mu}^n_h (dt)= \bar{\zeta}^n_h(t)\,dt$, with $  \bar{\zeta}^n_h \in L^\infty2([T-d,T])$, thus the term $ D_x h(X(s)) \bar{\zeta}^n_h(s)$ can be treated as a ``normal'' forcing term and  lemma \ref{l:apriori.esitmate.backward} can be applied to get existence and uniqueness  of \eqref{ABSDEtrascinata-approx-forz}, for every $n$.
 
We concentrate on the estimate \eqref{stimaInter}, we consider $t > T-d$, the most critical case:
\begin{equation}
p^n(t)= \E^{\mathcal{F}_t} p^n(t)=  \E^{\mathcal{F}_t} \int_t^T S(s-t)' D_xh(X(s)) \bar{\mu}^n_h(ds) +
\E^{\mathcal{F}_t} \int_t^T S(s-t)' \Lambda(s) ds
\end{equation}
by standard estimates we get:
\begin{align*}
 \E \,  \int_0^T \frac{1}{2} |p^n(t)|_H^2 e^{\beta t} \, dt & \leq \int_0^T 
e^{\beta t} \E \Big| \int_{t}^T S(s-t)' D_x h(X(s)) \bar{\mu}^n_h(ds)\Big|_H^2  \, dt \\ 
& + \E  \int_0^T e^{\beta t} \Big ( \int_{t}^T e^{-\beta s}  \,ds \int_{t}^T e^{\beta s} | S(s-t)' 
\Lambda(s)|_H^2  \, ds \Big)\, dt \\&  \leq  \int_0^T 
e^{\beta t}\E \Big| \int_{t}^T S(s-t)' D_x h(X(s)) \bar{\mu}^n_h(ds)\Big|_H^2  \, dt +
\frac{M}{\beta}  \int_0^T e^{\beta t} |\Lambda(t)|_H^2  \, dt
\end{align*}
where the constant $M$ depends on $T$ and the constants appearing in Hypothesis 1.

Let us come to the martingale term $q^n$.
Following \cite{hu1991adapted}, we get that:
\begin{equation}
q^n(t)=\int_t^T S(s-t)'K(s,t)\bar{\mu}^n_h(ds) + \int_t^T S(s-t)'\tilde{K}(s,t) \, ds,  \quad d\P \times dt  \text{ a.s.}
\end{equation}
where $K$, $\tilde {K}$ are given by the Martingale Representation Theorem respectively in \eqref{repreFormula} and 
\begin{equation}\label{repreFormula1}
\E^{\mathcal{F}_t}\Lambda(s)= \Lambda(s) - \int_t^s \tilde{K}(s,\theta)\, dW_{\theta}, \quad d\P \times d\theta  \text{ a.s. }
\end{equation}
Thus, see also \cite{guatteri2005backward}, section 4, equation (4.14), we get that
\begin{align*}
\E \int_0^T e^{\beta t } |q^n(t)|^2_{\mathcal{L}_2(K;H)} \, dt \leq 2 \E \Big[\int_0^T 
e^{\beta t} \Big| \int_{t}^T S(s-t)'K(s,t) \bar{\mu}^n_h(ds)\Big|_{\mathcal{L}_2(K;H)}^2  \, dt + \frac{C}{\beta} \int_0^T e^{\beta t}|\Lambda(t)|^2 _H \, dt
\Big]
\end{align*}
where $C$ depends only on $ T$ and the constants appearing in Hypothesis 1.
The case when $ 0\leq t \leq T-d$ can be treated in the same way.
\end{proof}

\begin{proposition}\label{prop:approxABSDEtrascinata}
 Let Hypothesis 1 holds true, let $\bar\mu_h$ be defined by (\ref{bar-mu}), and let us consider $(\bar\mu^n_h)_{n\geq 0}$ the 
 $w^*$-approximations of $\bar\mu_h$, absolutely continuous with respect to the Lebesgue measure on $[T-d,T]$.
 Let us consider the approximating ABSDEs (of ``standard'' type):
 \begin{equation}\label{ABSDEtrascinata-approx-mu}
\begin{split}
p^n(t) &= \int_{t \vee (T-d)}^T\!\! S\left(s-t \right) ' D_x h(X(s)) \bar{\mu}^n_h(ds) \\
&+ \int_t^T S(s-t)' \, \E^{\Fcal_s}\!\int_{-d}^0 D_xf(X(s),u(s-\theta))'p^n(s-\theta)\mu_f(d\theta)\, ds \\
&+ \int_t^T S(s-t)'\, \E^{\Fcal_s}\!\int_{-d}^0 D_xl(X(s+\theta),u(s))\mu_l(d\theta)ds - 
\int_t^T S(t-s)'q^n(s)\, dW(s) \\ & + \dis S(T-t)'D_x h(X(T))\mu_h(\{T\}), \qquad t \in [0,T]\\
&p^n(t) = 0  \qquad   \forall  t \in ]T,T+d], \P- a.s.\quad
q^n(t) = 0 \qquad a.e. \ t \in ]T,T+d], \P- a.s.\
\end{split}
\end{equation}
Then the pair $(p^n,q^n)$, solution to (\ref{ABSDEtrascinata-approx-mu}), converges
in $L^2_{\mathcal{F}}(\Omega\times [0,T+d]; H)\times 
L^2_{\mathcal{F}}(\Omega\times [0,T+d];\mathcal{L}_2(K;H))$ to the pair $(p,q)$, mild solution to
(\ref{ABSDEtrascinataMild}). Moreover such solution is unique in $L^2_{\mathcal{F}}(\Omega\times [0,T+d]; H)\times 
L^2_{\mathcal{F}}(\Omega\times [0,T+d];\mathcal{L}_2(K;H))$ .
\end{proposition}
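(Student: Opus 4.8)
The plan is to construct $(p,q)$ as the strong $L^2$-limit of the solutions $(p^n,q^n)$ of the regularized equations \eqref{ABSDEtrascinata-approx-mu}, the argument proceeding in four steps: well-posedness of each approximating equation, a Cauchy estimate, identification of the limit, and uniqueness. First I would check that each \eqref{ABSDEtrascinata-approx-mu} has a unique solution $(p^n,q^n)$. Since $\bar\mu^n_h$ is absolutely continuous, $\bar\mu^n_h(ds)=\bar\zeta^n_h(s)\,ds$, so the integral $\int_{t\vee(T-d)}^T S(s-t)'D_xh(X(s))\bar\mu^n_h(ds)$ is an ordinary forcing term, while the Dirac mass $\mu_h(\{T\})\delta_T$ supplies the genuine terminal datum $D_xh(X(T))\mu_h(\{T\})\in L^2_{\Fcal_T}(\Omega;H)$. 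Thus \eqref{ABSDEtrascinata-approx-mu} is a ``standard'' anticipated backward SPDE of the same type as \eqref{eq:ABSPDE}, and the contraction argument already used for \eqref{eq:ABSPDE}, taking $\beta$ large so that the anticipating term in $p^n(s-\theta)$ is controlled through the factor $\tfrac{C}{\beta}$, yields a unique $(p^n,q^n)$ together with the bound \eqref{stimaInter}.

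Next I would prove that $(p^n,q^n)$ is a Cauchy sequence. Writing $\hat p=p^n-p^m$ and $\hat q=q^n-q^m$, the terminal Dirac data and the $D_xl$-term cancel, so $(\hat p,\hat q)$ solves an anticipated equation driven only by the measure difference $\bar\mu^n_h-\bar\mu^m_h$ and by the anticipating $D_xf$-term in $\hat p$. The density differences $\bar\zeta^n_h-\bar\zeta^m_h$ are \emph{not} small in $L^2$, so Lemma \ref{l:apriori.esitmate.backward} cannot be applied to the $\mu_h$-part directly; instead I would use the estimate \eqref{stimaInter}, which keeps the measure integrated inside the square, and absorb the anticipating contribution by taking $\beta$ large. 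This reduces the claim to
\[
\mathrm{I}^{n,m}:=\E\!\int_0^T\!\! e^{\beta t}\Big|\int_{t\vee(T-d)}^T\!\! S(s-t)'D_xh(X(s))\,(\bar\mu^n_h-\bar\mu^m_h)(ds)\Big|^2_H\, dt\longrightarrow 0,
\]
together with the analogous term $\mathrm{II}^{n,m}$ in which the martingale kernel $K(s,t)$ of \eqref{repreFormula} replaces $D_xh(X(s))$; note that the $p$-estimate in \eqref{stimaInter} involves only $\mathrm{I}^{n,m}$, while $\mathrm{II}^{n,m}$ enters only through $\hat q$.

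The hard part will be these two convergences. For $\mathrm{I}^{n,m}$ I would fix $\omega$ in the full-measure set on which $X(\cdot)$ is continuous and fix $t$: then $s\mapsto S(s-t)'D_xh(X(s))$ is a continuous $H$-valued map on $[t\vee(T-d),T]$ with relatively compact range, and since the $\bar\mu^n_h$ have uniformly bounded total variation by the construction in Lemma \ref{lemma:approx-measure}, the scalar weak-$*$ convergence \eqref{conv-a-mu:expl} upgrades to strong convergence in $H$ of the corresponding integrals, after approximating the integrand uniformly by finite-rank continuous maps. The pointwise bound $Me^{\omega T}\,|\bar\mu_h|([T-d,T])\,\sup_s|D_xh(X(s))|_H$ lies in $L^2(\Omega)$ by the polynomial growth in (H6) and the moment estimate \eqref{stimafor}, which dominates the passage to the limit in $L^2(\Omega\times[0,T])$ and gives $\mathrm{I}^{n,m}\to0$. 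The term $\mathrm{II}^{n,m}$ is the genuinely delicate point, since $K(\cdot,t)$ is only square-integrable in $s$ and need not be continuous, so the weak-$*$ convergence cannot be invoked against it directly; I would treat it by the same domination scheme after regularizing $K$ in the time variable, exploiting that $K$ does not depend on $n$ and that $\bar\mu_h(\{T\})=0$. Once $\mathrm{I}^{n,m},\mathrm{II}^{n,m}\to0$, the sequence converges to some $(p,q)$.

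Finally, to identify the limit I would pass to the limit in \eqref{ABSDEtrascinata-approx-mu}: the anticipating $D_xf$-term converges because $p^n\to p$ in $L^2$, the stochastic integral converges by It\^o's isometry, and the measure term converges as above; since $\mu_h=\bar\mu_h+\mu_h(\{T\})\delta_T$ by \eqref{bar-mu}, the regularized integral and the surviving Dirac contribution recombine into $\int_{t\vee(T-d)}^T S(s-t)'D_xh(X(s))\mu_h(ds)$, so $(p,q)$ solves \eqref{ABSDEtrascinataMild}. Uniqueness in the stated space then follows by taking the difference of two solutions, whose $\mu_h$- and $D_xl$-terms vanish, applying Lemma \ref{l:apriori.esitmate.backward}, and choosing $\beta$ large enough to absorb the anticipating term.
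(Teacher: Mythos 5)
Your proposal follows essentially the same route as the paper's proof: you establish the Cauchy property of $(p^n,q^n)$ via the a priori estimate \eqref{stimaInter} with the anticipating $D_xf$-term absorbed by taking $\beta$ large, reduce everything to the two measure-difference terms $\mathrm{I}^{n,m}$ (handled by a.s.\ continuity of $s\mapsto S(s-t)'D_xh(X(s))$, weak-$*$ convergence and domination) and $\mathrm{II}^{n,m}$ (handled, as in the paper, by approximating the martingale kernel $K(s,t)$ with kernels continuous in $s$, dominated by $K$ and converging in $L^2(|\bar\mu_h|(ds)\,dt)$, with the uniform total-variation bound on $\bar\mu^n_h$ controlling the remainder), and then identify the limit in the mild formulation and prove uniqueness via Lemma \ref{l:apriori.esitmate.backward}. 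The only differences are presentational: you make explicit the well-posedness of each approximating equation and the upgrade of scalar weak-$*$ convergence to $H$-valued integrands, details the paper delegates to the preceding lemmas or glosses over.
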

\begin{proof}Let us first prove that the sequence $(p^n,q^n)_n$ is a Cauchy sequence in
$L^2_{\mathcal{F}}(\Omega\times [0,T+d]; H)\times 
L^2_{\mathcal{F}}(\Omega\times [0,T+d];\mathcal{L}_2(K,H) )$.
The equation satisfied by $(p^n_t-p^k_t,q^n_t-q^k_t) $, $\,n,k\geq 1$, turns out to be an ABSDE:

 \begin{equation}\label{ABSDEtrascinata-approx-nk}
\begin{split}
(p^n-p^k)(t) &= \int_{t \vee (T-d)}^T\!\! S\left(s-t \right) ' D_x h(X(s)) \bar{\mu}^n_h(ds) - \int_{t \vee (T-d)}^T\!\! S\left(s- t \right) ' D_x h(X(s)) \bar{\mu}^k_h(ds)\\
&+ \int_t^T S(s-t)' \, \E^{\Fcal_s}\!\int_{-d}^0 D_xf(X(s),u(s-\theta))'(p^n-p^k)(s-\theta)\mu_f(d\theta)\, ds \\
&+ \int_t^T S(s-t)'(q^n-q^k)(s)\, dW(s) 
\end{split}
\end{equation}
Now we use  estimate \eqref{stimaInter} with $\Lambda(s)= \E^{\Fcal_s}\!\dis\int_{-d}^0 D_xf(X(s),u(s-\theta))'(p^n-p^k)(s-\theta)\mu_f(d\theta) $  to  get 
 \begin{align}\label{convergenza}
&\E\int_0^{T+d} \left( \frac{1}{2}|({p^n}-p^k)(t)|^2_H + |(q^n-q^k)(t)|^2_{\mathcal{L}_2(K;H)}	 \right)e^{\beta t}dt &\\ \nonumber 
&\leq  C\Big[ \int_0^T\!\!\!\!e^{\beta t}\E\Big| \int_{t \vee (T-d)}^T\!\!\!\! S(s-t )' 
D_x h(X(s))(\bar{\mu}^n_h-\bar{\mu}^{k}_h)(ds)\Big|_H^2  \, dt \\ &  \nonumber+\E \int_0^Te^{\beta t}\Big| \int_{t \vee (T-d)}^T S(s-t )' K(s,t)(\bar{\mu}^n_h-\bar{\mu}^{k}_h)(ds)\Big|_{\mathcal{L}_2(K;H)}^2\,dt \\ \nonumber&+ \frac{2}{\beta} \E\int_0^{T+d} |({p^n}-p^k)(t)|^2_H e^{\beta t}dt  \Big]
 \end{align}
Hence,  choosing $\beta$ such that $ \frac{2}{\beta}  \leq \frac{1}{4}$ we get that 
 \begin{align}\label{convergenza1}
&\E\int_0^{T+d} \left( \frac{1}{4}|({p^n}-p^k)(t)|^2_H + |(q^n-q^k)(t)|^2_{\mathcal{L}_2(K;H)}\right)e^{\beta t}dt 
&\\ \nonumber  &\leq  C\Big[ \E \int_0^T\!\!\!\!e^{\beta t}\Big| \int_{t \vee (T-d)}^T\!\!\!\! S(s-t)' D_x h(X(s))(\bar{\mu}^n_h-\bar{\mu}^{k}_h)(ds)\Big|_H^2  \, dt \\ &  \nonumber+\E \int_0^Te^{\beta t}\Big| \int_{t \vee (T-d)}^T S(s-t )' K(s,t)(\bar{\mu}^n_h-\bar{\mu}^{k}_h)(ds)\Big|_{\mathcal{L}_2(K;H)}^2\,dt  \Big]
 \end{align}
In order to prove that the sequence $(p^n, q^n)$ is a Cauchy sequence, we need to exploit the $w^* $-convergence of the sequence of measures $\bar \mu^n_h$ as $n \to \infty$.

Let us consider the first term, since $s \to S(s-t)' D_x h(X(s))$ is continuous $\P$-a.s. and uniformly bounded we have, by  the Dominated Convergence Theorem:
\begin{align}\label{limite}
\lim_{n,k \to +\infty}\E \int_0^T\!\!\!\!e^{\beta t}\Big|
\int_{t \vee (T-d)}^T\!\!\!\! S(s-t)' D_x h(X(s))(\bar{\mu}^n_h-\bar{\mu}^{k}_h)(ds)\Big|_H^2  \, dt =0.
 \end{align}
The second term requires  some work, since $s \to S(s-t)'K(s,t)$ is not continuous.
First of all we notice that by \eqref{repreFormula} we get, by hypothesis 
\begin{align*}
\E\int_{0}^s |K(s,t)|^2 _{\mathcal{L}_2(K;H)}\, dt  \leq 4 \E |D_xh (X(s))|^2 \leq 4 C. 
\end{align*}
Hence, we get
\begin{align}\label{stimaKtot}
\E\int_{T-d}^T\int_{0}^s 
|K(s,t)|^2_{\mathcal{L}_2(K;H)}\, dt \, d 
|\bar{\mu}_h|(ds)  \leq 4 C | \bar{\mu}_h|([T-d,T]).
\end{align}
where $\bar\mu_h$ is the weak limit of the measures $\bar{\mu}^n_h$, and 
 $ |\bar{\mu}_h|$ is the positive measure given by $\bar\mu_h^+ + \bar\mu_h^-$.

Now we introduce some processes $K^{\varepsilon}(s,t)$ such that $s\to K^{\varepsilon} (s,t) $ has a continuous version for all $t \in [0,T]$, 
and that can be chosen so that:
\begin{subequations}
\begin{align}\label{approx}
& |K^\varepsilon (s,t)|\leq |K(s,t) |, \quad \text{ a.e. } s,t \in [T-d,T]\times [0,T]   \text{ and }  \P- a.s.,   \\ 
\label{approx2}
&\lim_{\varepsilon \to 0}\E\int_{0}^T \int_{T-d}^T |K^\varepsilon (s,t)- K(s,t) |^2|\bar{\mu}_h|(ds) \, dt=0.
 \end{align}
\end{subequations}
Such properties are very well known for  $H=\R$, see for instance \cite{rudin1987real}, and can be easily extended to a separable Hilbert space going through a basis expansion. 

We can finally conclude noticing that:
\begin{align*}
&\E \int_0^Te^{\beta t}\Big| \int_{t \vee (T-d)}^T S(s-t )' K(s,t)(\bar{\mu}^n_h-\bar{\mu}^{k}_h)(ds)\Big|_{\mathcal{L}_2(K;H)}^2\,dt  \\ &\leq 
2\Big[ \E \int_0^Te^{\beta t}\Big| \int_{t \vee (T-d)}^T S(s-t )' K^\varepsilon(s,t)(\bar{\mu}^n_h-\bar{\mu}^{k}_h)(ds)\Big|_{\mathcal{L}_2(K;H)}^2\,dt  \\ &+
\E \int_0^Te^{\beta t}\Big| \int_{t \vee (T-d)}^T S(s-t )' (K(s,t)-K^\varepsilon(s,t)) (\bar{\mu}^n_h-\bar{\mu}^{k}_h)(ds)\Big|_{\mathcal{L}_2(K;H)}^2\,dt\Big]
\\ &\leq 
C\Big[ \E \int_0^Te^{\beta t}\Big| \int_{t \vee (T-d)}^T S(s-t )' K^\varepsilon(s,t)(\bar{\mu}^n_h-\bar{\mu}^{k}_h)(ds)\Big|_{\mathcal{L}_2(K;H)}^2\,dt  \\
&+ \E \int_0^T \int_{T-d}^T \Big| K(s,t)-K^\varepsilon(s,t)\Big|_{\mathcal{L}_2(K;H)}^2|\bar{\mu}_h|(ds)\, dt\Big].
\end{align*}
We let first $n$ and $k$ go to infinity to let the first term go to zero. Then, using \eqref{approx} and the fact that by the Banach-Steinhaus
Theorem
\[
 \vert \bar\mu^n_h\vert\leq \vert \bar\mu_h\vert,
\]
we let $\varepsilon$ go to zero and we get the convergence to $0$ of the whole expression. 
So $(p_n,q_n)$ is a Cauchy sequence in $L^2_{\mathcal{F}}(\Omega\times [0,T+d]; H)\times 
L^2_{\mathcal{F}}(\Omega\times [0,T+d];\mathcal{L}_2(K,H) )$, thus there exists a couple $(\bar{p},\bar{q})$ such that:
\begin{equation}
\E\int_0^{T+d}  |(p^n-\bar{p})(s)|^2 _H \, ds +  \E\int_0^{T+d}  |(q^n-\bar{q})(s)|^2 _{\mathcal{L}_2(K;H) }\, ds \to 0 \qquad \text{ as } n \to \infty.
\end{equation}
It remains to show the convergence inside the mild formulation of the equation to get that the limit $(\bar{p},\bar{q})$ fulfils equation \eqref{ABSDEtrascinataMild}.
Writing again the equation satisfied by the difference $p^n-p^k$, and recalling the measurability property typical of the solutions of backward equations, we get:
 \begin{equation}\label{ABSDEtrascinata-approx-nk-bis}
\begin{split}
\E^{\mathcal{F}_t} (p^n-p^k)(t) &=  \E^ {\mathcal{F}_t}\int_{t \vee (T-d)}^T\!\! S\left(s-t \right) ' D_x h(X(s)) (\bar{\mu}^n_h -\bar{\mu}^k_h)(ds)\\
&+ \E^ {\mathcal{F}_t}\int_t^T S(s-t)' \, \E^{\Fcal_s}\!\int_{-d}^0 D_xf(X(s),u(s-\theta))'(p^n-p^k)(s-\theta)\mu_f(d\theta)\, ds 
\end{split}
\end{equation}
Similarly to \eqref{limite}, by the Dominated Convergence Theorem, we have
\begin{equation}
\E\Big| \E^ {\mathcal{F}_t}\int_{t \vee (T-d)}^T\!\! S\left(s-t \right) ' D_x h(X(s)) (\bar{\mu}^n_h -\bar{\mu}^k_h)(ds) \Big|^2
 \to 0 \quad \text{ as } n,k \to +\infty.
\end{equation}
Moreover we get that for some constant $M$, which depends only on $T$ and the quantities appearing in  Hypothesis 1,  the following holds
 \begin{align*}
  &\E\Big| \E^{\mathcal{F}_t}\!\!\int_t^T \E^{\mathcal{F}_s}S(s-t)' \dis\int_{-d}^0 D_xf(X(s),u(s-\theta))'(p^{n}-{p}^k)(s-\theta)\mu_f(d\theta) ds\Big|^2
\\ & \leq M |\mu_f|^2([-d,0]) \E \int_{0}^{T+d}|(p^{n}-{p}^k)(r)|^2  dr \to 0 \quad \text{ as } n,k \to +\infty.
\end{align*}
 Thus $ \E|p^n(t) -p^k(t)|^2 \to 0$ as $k,n \to \infty $ for every $t \in[0,T]$, and 
 there exists a $p(t)$ such that  $ \E|p^n(t) -p(t)|^2 \to 0$ as $n \to \infty $ for every $t \in[0,T]$. It can be seen, again by the 
 dominated convergence Theorem, that also $ \dis\int_0^{T+d} \E|p^n(t) -p(t)|^2 \,dt \to 0$ as $n \to \infty $ so such limit must coincide with $\bar{p}(t)$ at least $\P$ a.s. for a.e. $t \in [T-d,T]$.
We  eventually get that $(\bar{p},\bar{q})$, choosing if necessary ${p}$ instead of $\bar{p}$ is a mild solution to equation \eqref{ABSDEtrascinataMild} since every term in \eqref{ABSDEtrascinata-approx-mu} must converge to its expected limit. 

Uniqueness is not a problem since as soon as we calculate the difference of two mild solutions, the data disappear and we can treat the equation as in lemma 
\ref{l:apriori.esitmate.backward}.

 \end{proof}

\section{Functional SMP: Necessary form}
Now we are able to prove a version of the SMP, in its necessary form, for the control problem with state equation and cost functional given
by \eqref{eq:state} and \eqref{eq:cost}, respectively. \\
Let $(\bar{X},\bar{u})$ be an optimal pair.
We start this section by recovering the form of the derivative of the cost functional. 
\begin{lemma}\label{l:expansion_cost}
The cost functional $J(\cdot)$ is Gateaux differentiable and the derivative has the following form
\begin{equation*}
\begin{split}
\frac{d}{d\rho} & J(\bar{u}(\cdot) +\rho v(\cdot))|_{\rho = 0} = \E \int_0^T  \int_{-d}^0 D_xl(\bar{X}(t+\theta), \bar{u}(t))Y(t+\theta) \mu_l(d\theta) dt \\
  &+ \E \int_0^T  \int_{-d}^0  D_ul(\bar{X}(t+\theta), \bar{u}(t) )v(t) \mu_l(d\theta) dt + 
  \E\int_{T-d}^T  D_xh\left(\bar{X}(\theta)\right) Y(\theta) \mu_h(d\theta)
\end{split}
\end{equation*}
\end{lemma}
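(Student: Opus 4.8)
The plan is to compute the limit of the difference quotient $\rho^{-1}\big[J(\bar u + \rho v) - J(\bar u)\big]$ directly, relying on the first-order expansion of the state established above. Write $u^{\rho} = \bar u + \rho v$ and let $X^{\rho}$ be the corresponding state. By \eqref{resto:fin:rho} and \eqref{resto:fin:rho:frac} we have $X^{\rho}(t) = \bar X(t) + \rho Y(t) + \rho\,\tilde X^{\rho}(t)$, where $\E\sup_{t\in[0,T]}|\tilde X^{\rho}(t)|^2 \to 0$ as $\rho \to 0$ by \eqref{lemma:convergence X tilde}. Splitting $J$ into its running part and its final part, I would treat the two separately and recall throughout that $X_t(\theta) = X(t+\theta)$.

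For the running cost, using the fundamental theorem of calculus applied to the $\calg^{0,1,1}$ map $l$, I would write
\[
\frac{1}{\rho}\big[l(X^{\rho}_t(\theta),u^{\rho}(t)) - l(\bar X_t(\theta),\bar u(t))\big] = \int_0^1 \Big[ D_x l(Z^{\rho,\lambda}_t(\theta),u^{\rho,\lambda}(t))\,(Y_t(\theta)+\tilde X^{\rho}_t(\theta)) + D_u l(Z^{\rho,\lambda}_t(\theta),u^{\rho,\lambda}(t))\,v(t)\Big]\,d\lambda,
\]
where $Z^{\rho,\lambda}_t(\theta) = \bar X_t(\theta)+\lambda\rho(Y_t(\theta)+\tilde X^{\rho}_t(\theta))$ and $u^{\rho,\lambda}(t) = \bar u(t)+\lambda\rho v(t)$. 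As $\rho\to0$ the arguments satisfy $Z^{\rho,\lambda}_t(\theta)\to\bar X_t(\theta)$ and $u^{\rho,\lambda}(t)\to\bar u(t)$, so by the strong continuity of $D_x l$ and $D_u l$ together with $\tilde X^{\rho}\to0$ the integrand converges pointwise to $D_x l(\bar X_t(\theta),\bar u(t))\,Y_t(\theta) + D_u l(\bar X_t(\theta),\bar u(t))\,v(t)$. Integrating against the finite measure $\mu_l$ and over $[0,T]$, then taking expectation, produces the first two asserted terms, once the exchange of limit and integration is justified. The final cost is handled identically with $h\in\calg^1(H)$: the difference quotient of $\E\int_{T-d}^T h(X(\theta))\mu_h(d\theta)$ equals $\int_0^1 \E\int_{T-d}^T D_x h(\bar X(\theta)+\lambda\rho(Y(\theta)+\tilde X^{\rho}(\theta)))(Y(\theta)+\tilde X^{\rho}(\theta))\,\mu_h(d\theta)\,d\lambda$, which converges to $\E\int_{T-d}^T D_x h(\bar X(\theta))\,Y(\theta)\,\mu_h(d\theta)$.

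The main obstacle is justifying the passage to the limit under the expectation and the measures, and I would settle it by a dominated-convergence / uniform-integrability argument combining the polynomial growth bounds (H5) and (H6) on $D_x l, D_u l, D_x h$ with uniform-in-$\rho$ moment estimates on the family $\{X^{\rho}\}$. Since $u^{\rho}=\bar u+\rho v$ is bounded in $L^2_{\Fcal}(\Omega\times[0,T];U)$ uniformly for $\rho\in[0,1]$, Proposition \ref{prop-esistenza} and the estimate \eqref{stimafor} give $\sup_{\rho\in[0,1]}\E\sup_{t\in[-d,T]}|X^{\rho}(t)|^p<\infty$ for every $p\geq1$, hence also uniform bounds on the intermediate points $Z^{\rho,\lambda}$. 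The delicate contribution is the cross term involving $\tilde X^{\rho}$: by Cauchy--Schwarz its $L^1(\Omega)$-norm is bounded by $\big(\E\int_0^T\int_{-d}^0|D_x l(Z^{\rho,\lambda}_t(\theta),u^{\rho,\lambda}(t))|^2\,|\mu_l|(d\theta)\,dt\big)^{1/2}\big(\E\sup_{t\in[0,T]}|\tilde X^{\rho}(t)|^2\big)^{1/2}$, where the first factor stays bounded thanks to the growth bound and the uniform moments, while the second factor vanishes by \eqref{lemma:convergence X tilde}. The genuine limit terms, carrying $Y$ and $v$, then converge by dominated convergence: their integrands are uniformly integrable because $|D_x l(Z^{\rho,\lambda})|$ and $|D_u l(Z^{\rho,\lambda})|$ are bounded in every $L^p(\Omega)$ uniformly in $\rho,\lambda$, and $Y,v\in L^2_{\Fcal}$, so products lie in $L^{1+\varepsilon}(\Omega\times[0,T])$ uniformly. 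Collecting the three limits yields exactly the stated expression, and since the limit exists for every admissible direction $v$, the functional $J$ is Gâteaux differentiable.
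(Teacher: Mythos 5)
Your proposal is correct and follows essentially the same route as the paper's proof: both split the difference quotient into running and final cost, expand via the fundamental theorem of calculus along intermediate points using $X^{\rho}=\bar X+\rho Y+\rho\tilde X^{\rho}$, and pass to the limit using \eqref{lemma:convergence X tilde} (the paper does the $x$- and $u$-increments in two separate steps rather than along a joint segment, a cosmetic difference). In fact you are more explicit than the paper about the dominated-convergence/uniform-integrability justification, which the published proof leaves implicit.
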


\begin{proof}
Let $(\bar X,\bar u)$ be an optimal pair and let $w$ be another admissible control, set $ v= w-\bar u$
and $u^\rho=\bar u+\rho w$.  We can write the variation of the cost functional in the form
\begin{equation*}
\begin{split}
 0&\leq \frac{J(u^\rho(\cdot))-J(\bar u(\cdot))}{\rho}\\
 & =  \E \frac{1}{\rho}\int_0^T  \int_{-d}^0  \left[ l(X^\rho(t+\theta), u^\rho(t)) - l(\bar X(t+\theta),\bar u(t))\right]\mu_l(d\theta) dt \\
& + \E\frac{1}{\rho}\int_{T-d}^T \left[h(X^\rho(\theta))- h(\bar X(\theta))\right]\mu_h(d\theta)=I_1+I_2.
 \end{split}
 \end{equation*}
 Now
 \begin{equation*}
 \begin{split}
&I_1 = \E \frac{1}{\rho}\int_0^T  \int_{-d}^0 \left[  l( X^\rho(t+\theta), u^\rho(t)) - l(\bar X(t+\theta), u^{\rho}(t))\right]\mu_l(d\theta) dt\\
  &+\E \frac{1}{\rho}\int_0^T  \int_{-d}^0 \left[ l(\bar{X}(t+\theta), u^\rho(t))- l(\bar X(t+\theta),\bar u(t))\right]\mu_l(d\theta)dt \\
  &= \E \int_0^T  \int_{-d}^0 \int_0^1 \langle D_xl(\bar{X}(t+\theta) + \lambda(X^\rho(t+\theta) - \bar{X}(t+\theta)), u^\rho(t)),(\tilde{X}^\rho(t+\theta) + Y(t+\theta))\rangle d\lambda \mu_l(d\theta) dt \\
  &+ \E \int_0^T  \int_{-d}^0 \int_0^1\langle  D_ul(\bar{X}(t+\theta), \bar{u}(t) + \lambda \rho v(t) ),v(t) \rangle d\lambda \mu_l(d\theta) dt  
\end{split}
\end{equation*} 

\begin{equation*}
\begin{split}
I_2 &= \E\frac{1}{\rho}\int_{T-d}^T \left[h(X^\rho(\theta))- h(\bar X(\theta))\right]\mu_h(d\theta) \\
&= \E\int_{T-d}^T \int_0^1 \langle D_xh\left(\bar{X}(\theta) + \lambda (X^\rho(\theta) - \bar{X}(\theta))\right), (\tilde{X}^\rho(\theta) + Y(\theta))\rangle d\lambda \mu_h(d\theta)
\end{split}
\end{equation*}

From \eqref{lemma:convergence X tilde} we know that 
\[
\lim_{\rho \rightarrow 0}\E \sup_{t \in [0,T]}|\tilde{X}^{\rho}(t)|_H^2 = 0
\]
so that, sending $\rho$ to 0, we obtain the required equivalence
\begin{equation*}
\begin{split}
0 &\leq \frac{d}{d\rho}J(\bar{u}(\cdot) +\rho v(\cdot))|_{\rho = 0} = \E \int_0^T  \int_{-d}^0 \langle D_xl(\bar{X}(t+\theta)), \bar{u}(t)),Y(t+\theta) \rangle\mu_l(d\theta) dt \\
  &+ \E \int_0^T  \int_{-d}^0 \langle D_ul(\bar{X}(t+\theta), \bar{u}(t) ),v(t) \rangle\mu_l(d\theta) dt + \E\int_{T-d}^T  \langle D_xh\left(\bar{X}(\theta)\right) ,Y(\theta)\rangle \mu_h(d\theta)
\end{split}
\end{equation*}
\end{proof}
Now define the Hamiltonian associated to the system by setting $\mathcal{H}: [0,T] \times C([-d,0];H) \times U_c \times H \to \R$ as
\begin{equation}\label{eq:hamiltonian}
\begin{split}
\mathcal{H}(t,x,u,p) &=  \< \int_{-d}^0 f(x(\theta),u)\mu_f(d\theta), p \>_H + \int_{-d}^0 l(x(\theta),u)\mu_l(d\theta) \\
&= \<F(x,u),p\>_H  + L(t,x,u).
\end{split}
\end{equation}
where in the last equality we adopted the formalism introduced in Remark \ref{r:abstract}.

Let us state the stochastic maximum principle, where for the Definition of $\bar X_t$ we refer to (\ref{defdiX_t}).
\begin{theorem}
Let Hypothesis 1 be satisfied and suppose that $(\bar{X},\bar{u})$ is an optimal pair for the control problem. Then there exist a pair of processes $(p,q) \in L^2_\mathcal{F}(\Omega\times [0,T+d];H)\times L^2_\mathcal{F}(\Omega\times [0,T+d];\mathcal{L}_2(K;H))$ which are the solution to the ABSDE \eqref{ABSDEtrascinataMild} such that the following variational inequality holds
\begin{equation*}
\<\frac{\partial}{\partial u}\mathcal{H}(t,\bar{X}_t,\bar{u}(t), p(t),q(t)), w - \bar{u}(t)\> \geq 0 
\end{equation*}
for all $w \in U_c$, $\mP \times dt$- a.e., where $\mathcal{H}$ is the Hamiltonian function defined in \eqref{eq:hamiltonian}.
\end{theorem}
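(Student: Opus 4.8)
The plan is to combine the first-order expansion of the cost functional from Lemma \ref{l:expansion_cost} with a duality relation between the first variation process $Y$ and the adjoint process $p$ solving the ABSDE \eqref{ABSDEtrascinataMild}. First I would recall that optimality of $(\bar X,\bar u)$ forces
\[
0 \leq \frac{d}{d\rho}J(\bar u + \rho v)|_{\rho=0}
\]
for every admissible perturbation direction $v = w - \bar u$ with $w \in U_c$; by Lemma \ref{l:expansion_cost} the right-hand side equals
\begin{equation*}
\begin{split}
\E \int_0^T\!\!  \int_{-d}^0\! \<D_xl(\bar X(t+\theta),\bar u(t)),Y(t+\theta)\>\mu_l(d\theta)dt
&+ \E \int_0^T\!\! \int_{-d}^0\! \<D_ul(\bar X(t+\theta),\bar u(t)),v(t)\>\mu_l(d\theta)dt\\
&+ \E\int_{T-d}^T \<D_xh(\bar X(\theta)),Y(\theta)\>\mu_h(d\theta).
\end{split}
\end{equation*}
The two terms containing $Y$ must be rewritten so that $Y$ is eliminated in favour of $p$; only the explicit $D_ul$ term and a contribution from $D_uf$ hidden in the dynamics of $Y$ should survive, and together they should assemble into $\<\partial_u\mathcal H,\, v\>$.

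The core step is the duality computation. I would apply the It\^o formula (in its mild, integrated form, since neither equation admits a classical differential when $\mu_h$ is singular) to the pairing $\<Y(t),p(t)\>_H$ between the first variation process $Y$ solving \eqref{eq:first.variation} and the adjoint pair $(p,q)$ solving \eqref{ABSDEtrascinataMild}. Because $Y_0=0$ and $p(t)=q(t)=0$ for $t\in(T,T+d]$, the boundary contributions should telescope correctly. The forward generator $A$ acting on $Y$ cancels against the backward $A'$ acting on $p$; the $D_xf$ driving term of $Y$ pairs with the anticipating $D_xf$-term of $p$ after a change of variables $s\mapsto s-\theta$ that shifts the delay, and here the anticipating structure of the ABSDE together with the conditional expectations $\E^{\Fcal_s}$ is precisely what makes the two delayed terms match; the $D_xl$ forcing term in $p$ produces exactly the $\int\<D_xl,Y\>\mu_l$ contribution, while the $\mu_h$-integral in $p$ reproduces the final-cost term $\int\<D_xh,Y\>\mu_h$. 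What remains uncancelled on the $Y$-side is only the $D_uf$ source, which I expect to yield
\[
\E\int_0^T \Big\< \int_{-d}^0 D_uf(\bar X(t+\theta),\bar u(t))\mu_f(d\theta)\,v(t),\, p(t)\Big\>dt,
\]
and combining this with the surviving $D_ul$ term gives $\E\int_0^T\<\partial_u\mathcal H(t,\bar X_t,\bar u(t),p(t)),v(t)\>dt \geq 0$.

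The final step is localization: since the inequality $\E\int_0^T\<\partial_u\mathcal H,\, w-\bar u(t)\>dt \geq 0$ holds for every admissible $w$, a standard argument (choosing $w$ to agree with $\bar u$ off an arbitrary progressively measurable set, using the convexity of $U_c$ and \eqref{eq:admissible_control}) upgrades the integrated inequality to the pointwise variational inequality
\[
\<\partial_u\mathcal H(t,\bar X_t,\bar u(t),p(t)),\, w-\bar u(t)\>\geq 0,\qquad \mP\times dt\text{-a.e.},
\]
for all $w\in U_c$.

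I expect the main obstacle to be justifying the duality (It\^o product) formula rigorously in the mild, anticipating, infinite-dimensional setting. Because $p$ solves \eqref{ABSDEtrascinataMild} only in mild form and $\mu_h$ may be singular, I cannot differentiate directly; instead I would carry out the pairing on the regularized level, working with the approximating ABSDE \eqref{ABSDEtrascinata-approx-mu} (whose differential form is legitimate since $\bar\mu_h^n$ is absolutely continuous) and the Yosida-type or resolvent-smoothed version of $Y$, establish the identity there, and then pass to the limit. The convergence $(p^n,q^n)\to(p,q)$ in $L^2_{\mathcal F}$ granted by Proposition \ref{prop:approxABSDEtrascinata}, the $w^*$-convergence $\bar\mu_h^n\to\bar\mu_h$ from Lemma \ref{lemma:approx-measure}, together with the polynomial growth bounds \eqref{stimafor}, \eqref{stimaantic} on $X$ and $p$, should suffice to take the limit in each bilinear term; the delicate point is controlling the cross term involving $q^n$ and the singular part $\mu_h(\{T\})$ of the final measure, exactly as in the Cauchy estimate \eqref{convergenza1}.
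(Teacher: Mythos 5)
Your proposal matches the paper's own argument essentially step for step: the paper likewise combines the expansion of Lemma \ref{l:expansion_cost} with a duality identity obtained by applying the It\^o product formula at the doubly regularized level --- the measure approximation $\bar\mu_h^n$ of Proposition \ref{prop:approxABSDEtrascinata} (with the atom $\mu_h(\{T\})$ split off as a terminal condition) combined with the Yosida-type operators $J_k=k(k-A)^{-1}$ applied to $p^n,q^n$ and $Y$ --- and then passes to the limit first in $k$ and then in $n$, exactly as you anticipate, including the identified delicate points ($q^n$-convergence via the martingale-representation kernel and the singular part of $\mu_h$). Your explicit localization argument at the end is, if anything, slightly more detailed than the paper, which concludes directly from the integrated inequality.
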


\begin{proof}
Firstly we study the duality between the first variation process $Y(\cdot)$ and the adjoint process $p(\cdot)$. Then we rewrite the duality using the variation of the cost functional. \\
As a first step we are going to prove the following duality formula
\begin{equation}\label{eq:duality}
\begin{split}
\E\<Y(T),D_x h(X(T))\mu_h(\{T\})\> &+  \E\int_0^T \int_{-d}^0 \<D_xl(X(t+\theta),u(t)),Y(t)\>\mu_l(d\theta)dt\\
&= \E\int_0^T \int_{-d}^0 \< D_uf(X(t+\theta),u(t))'p(t),v(t)\>\mu_f(d\theta) dt. \\
&-\E\dis\int_{T-d}^T \<Y(\theta), D_xh(X(\theta))\>\bar{\mu}_h(d\theta)
\end{split}
\end{equation}
which is the crucial relation from which we can formulate the SMP.
Let us recall that the dual equation \eqref{ABSDEtrascinataMild} associated to the system can not be written in differential form. Then, in order to obtain the variational inequality, we have to approximate the measure $\mu_h$ by means of the sequence $\bar{\mu}^n_h$, once we have subtracted its mass in $\lbrace T \rbrace$. What we obtain is equation \eqref{ABSDEtrascinata-approx-mu}, which we rewrite below for the reader's convenience
\begin{equation}
\begin{split}
p^n(t) &= \int_{t \vee (T-d)}^T\!\! S\left(s-t\right) ' D_x h(X(s)) \bar{\mu}^n_h(ds) \\
&+ \int_t^T S(s-t)' \, \E^{\Fcal_s}\!\int_{-d}^0 D_xf(X(s),u(s-\theta))'p^n(s-\theta)\mu_f(d\theta)\, ds \\ 
&+ \int_t^T S(s-t)'\, \E^{\Fcal_s}\!\int_{-d}^0  D_xl(s,X(s+\theta),u(s))\mu_l(d\theta)ds - \int_t^T S(t-s)'q^n(s)\, dW(s) \\ & + \dis S(T-t)'D_x h(X(T))\mu_h(\{T\}), \qquad t \in [0,T]\\
p^n(t)&=0,  \qquad \text{for all } t \in ]T,T+d], \P-\text{a.s.},\qquad   q^n(t)=0 \quad  \text{a.e. } t \in ]T,T+d], \P-\text{a.s.} .
\end{split}
\end{equation}

For all $k \in \mathbb{N}$, let $ J_k:= k(k-A)^{-1}$, the bounded operators such that $ \lim_{k \to +\infty} |J_k x-x|_H=0$, for all $ x \in H $.  We set $p^{n,k}(t):= J'_k p^n(t)$ and $ q^{n,k}(t):= J'_k q^n(t)$, hence by Dominated Convergence Theorem it can be proved:
\begin{align}\label{pnk}
\E\sup_{t \in [0,T+d]} |p^{n,k}(t)- p^{n}(t)|^2 _H \to 0, \quad  \text{ as } k \to 0; \\\nonumber
\E \int_{0}^{T+d}|q^{n,k}(t)- q^{n}(t)|^2 _{\mathcal{L}(K;H)}\, dt \to 0, \quad  \text{ as } k \to 0. 
\end{align}
Notice that the processes $p^{n,k}$ admit an It\^o differential:
\begin{equation}\label{eq:p^nk}
\left\{
\begin{array}{ll}
- dp^{n,k}(t) = \Big[ A'p^{n,k}(t) +   J'_k\displaystyle \int_{-d}^0  D_xf(X(t),u(t-\theta))'p^{n}(t-\theta)\mu_f(d\theta) \\ \\
\quad\quad\quad\quad \quad + J'_k \dis\int_{-d}^0  D_xl(t,X_t(\theta),u(t))\mu_l(d\theta)\Big] dt +J'_kD_xh(X(t))\bar{\zeta}^n_h(t) dt - q^{n,k}(t)dW(t),\\ \\
p^{n,k}(T) =  J'_k D_x h(X(T))\mu_h(\{T\}),&
\end{array}
\right.
\end{equation}
where we denoted by $\bar{\zeta}^n_h(\cdot)$ the Radon-Nykodim derivative of $\bar{\mu}^n_h$, with respect to the Lebesgue measure
on $[T-d,T]$.

Let us now consider the $k$-approximation of the first variation process  $Y^k (t)=: J_k Y (t)$, see \eqref{eq:first.variation}.
The equation for $Y^k$ is 
\begin{equation}
\begin{system}
\displaystyle \frac{dY^k}{dt}(t) = AY^k(t) +  J_k\int_{-d}^0  D_xf(Xt+(\theta),u(t))Y(t+\theta)\mu_f(d\theta) \\
\qquad \qquad +  J_k\int_{-d}^0 D_uf(X_t(\theta),u(t))\mu_f(d\theta)v(t), \\
Y^k(0) = 0.
\end{system}
\end{equation}
One can prove, evaluating the difference between $Y$ and $Y^k$ and using standard arguments, that:
\begin{equation}
\E\sup_{t \in [-d,T]} |Y^k(t)- Y(t)|^2 _H\to 0, \quad \text{ as } k \to 0
\end{equation}
Computing the It\^o formula for the product $d\<p^{n,k}(t),Y^k(t)\>$, integrating in time and taking expectation we end up with
\begin{equation}\label{ito_duality}
\begin{split}
\E\<Y^k(T),p^{n,k}(T)\>_H &=  \E\int_0^T  J_k\< \int_{-d}^0 D_xf(X(t+\theta),u(t))Y(t+\theta)\mu_f(d\theta), p^{n,k}(t)\>_H dt \\
&+ \E\int_0^TJ_k \< \int_{-d}^0 D_uf(X(t+\theta),u(t))\mu_f(d\theta)v(t), p^{n,k}(t)\>_H dt \\
&- \E\int_0^T \<Y^k(t),J'_k\int_{-d}^0 D_xl(X(t+\theta),u(t))\mu_l(d\theta)\>_Hdt\\
&-\E\int_0^T \<Y^k(t), J'_k \int_{-d}^0 D_xf(X(t),u(t-\theta))'p^{n}(t-\theta)\mu_f(d\theta) \>_H dt \\
&-\E\dis\int_{T-d}^T\<Y^{k}(\theta), J'_k D_xh(X(\theta))\bar{\zeta}^n_h(\theta)\>d\theta
\end{split}
\end{equation}
Now we let $k$ tend to $\infty$ to get
the following duality relation
\begin{equation}\label{eq:duality_approx}
\begin{split}
&\E\<Y(T),D_x h(X(T))\mu_h(\{T\})\> +  \E\int_0^T \int_{-d}^0 \<D_xl(X(+\theta),u(t)),Y(t)\>\mu_l(d\theta)dt\\
&= \E\int_0^T \int_{-d}^0 \< D_uf(X(t+\theta),u(t))'p^{n}(t),v(t)\>\mu_f(d\theta) dt -\E\dis\int_{T-d}^T \<Y(\theta), D_xh(X(\theta))\>\bar{\mu}_h^n(d\theta)
\end{split}
\end{equation}
In order to obtain the required duality formula \eqref{eq:duality} we have to pass to the limit as $n\to \infty$ in \eqref{eq:duality_approx}. This can be  done using the result on convergence of measures stated in Lemma \ref{lemma:approx-measure} and the convergence results proved in Proposition \ref{prop:approxABSDEtrascinata}. 

We thus end up with \eqref{eq:duality}, and recalling that:
 \begin{equation*}\label{dec:mu_h}
 \mu_h= \bar \mu_h + \mu_h(\left\lbrace T\right\rbrace)\delta_T
 \end{equation*}
we can write \eqref{eq:duality} as:
\begin{equation}\label{eq:duality_lim}
\begin{split}
&  \E\dis\int_{T-d}^T \<Y(\theta), D_xh(X(\theta))\>{\mu}_h(d\theta) +
\E\int_0^T \int_{-d}^0 \<D_xl(X(t+\theta),u(t)),Y(t)\>\mu_l(d\theta)dt\\
&= \E\int_0^T \int_{-d}^0 \< D_uf(X(t+\theta),u(t))'p(t),v(t)\>\mu_f(d\theta) dt.
\end{split}
\end{equation}
On the other side, from lemma \ref{l:expansion_cost}, we know that
\begin{equation}\label{derivata}
\begin{split}
0 &\leq \frac{d}{d\rho}J(\bar{u}(\cdot) +\rho v(\cdot))|_{\rho = 0} = \E \int_0^T  \int_{-d}^0 \langle D_xl(t, \bar{X}(+\theta)), \bar{u}_t),Y_t(\theta)\rangle \mu_l(d\theta) dt \\
  &+ \E \int_0^T  \int_{-d}^0  \langle D_ul(t, \bar{X}(t+\theta), \bar{u}_t ),v_t \rangle\mu_l(d\theta) dt + \E\int_{T-d}^T  \langle D_xh\left(\bar{X}(\theta)\right), Y(\theta) \rangle\mu_h(d\theta)
\end{split}
\end{equation}
So substituting  \eqref{eq:duality_lim} in  \eqref{derivata}  we eventually get
\begin{equation}
\begin{split}
0 &\leq \E\int_0^T \int_{-d}^0 \< D_uf(\bar{X}(t+\theta),\bar{u}(t))'p(t),v(t)\> \mu_f(d\theta)dt + \E \int_0^T  \int_{-d}^0  \<D_ul(t, \bar{X}(t+\theta), \bar{u}_t ),v_t \>\mu_l(d\theta) dt \\
&= \E\int_0^T \<\frac{\partial}{\partial u}H(t,\bar{X}_t,\bar{u}(t), p(t),q(t)), v(t) \>dt
\end{split}
\end{equation} 
from which the required result holds true.
\end{proof}

\bibliography{mybib}

\begin{thebibliography}{10}

\bibitem{ChenWuAutomatica2010}
Li~Chen and Zhen Wu.
\newblock Maximum principle for the stochastic optimal control problem with
  delay and application.
\newblock {\em Automatica J. IFAC}, 46(6):1074--1080, 2010.

\bibitem{ChenWuYu2012}
Li~Chen, Zhen Wu, and Zhiyong Yu.
\newblock Delayed stochastic linear-quadratic control problem and related
  applications.
\newblock {\em Journal of Applied Mathematics}, 46:22 pp.
  doi:10.1155/2012/835319, 2012.

\bibitem{da1992stochastic}
Giuseppe Da~Prato and Jerzy Zabczyk.
\newblock {\em Stochastic equations in infinite dimensions}, volume~44 of {\em
  Encyclopedia of Mathematics and its Applications}.
\newblock Cambridge University Press, Cambridge, 1992.

\bibitem{du2013maximum}
Kai Du and Qingxin Meng.
\newblock A maximum principle for optimal control of stochastic evolution
  equations.
\newblock {\em SIAM J. Control Optim.}, 51(6):4343--4362, 2013.

\bibitem{eidelman2004functional}
Yuli Eidelman, Vitali Milman, and Antonis Tsolomitis.
\newblock {\em Functional analysis}, volume~66 of {\em Graduate Studies in
  Mathematics}.
\newblock American Mathematical Society, Providence, RI, 2004.
\newblock An introduction.

\bibitem{fuhrman2012stochastic}
Marco Fuhrman, Ying Hu, and Gianmario Tessitore.
\newblock Stochastic maximum principle for optimal control of {SPDE}s.
\newblock {\em C. R. Math. Acad. Sci. Paris}, 350(13-14):683--688, 2012.

\bibitem{fuhrman2013stochastic}
Marco Fuhrman, Ying Hu, and Gianmario Tessitore.
\newblock Stochastic maximum principle for optimal control of {SPDE}s.
\newblock {\em Appl. Math. Optim.}, 68(2):181--217, 2013.

\bibitem{fuhrman2010stochastic}
Marco Fuhrman, Federica Masiero, and Gianmario Tessitore.
\newblock Stochastic equations with delay: optimal control via {BSDE}s and
  regular solutions of {H}amilton-{J}acobi-{B}ellman equations.
\newblock {\em SIAM J. Control Optim.}, 48(7):4624--4651, 2010.

\bibitem{fuhrman2015stochastic}
Marco Fuhrman and Carlo Orrieri.
\newblock Stochastic {M}aximum {P}rinciple for {O}ptimal {C}ontrol of a {C}lass
  of {N}onlinear {SPDE}s with {D}issipative {D}rift.
\newblock {\em SIAM J. Control Optim.}, 54(1):341--371, 2016.

\bibitem{fuhrman2002nonlinear}
Marco Fuhrman and Gianmario Tessitore.
\newblock Nonlinear {K}olmogorov equations in infinite dimensional spaces: the
  backward stochastic differential equations approach and applications to
  optimal control.
\newblock {\em Ann. Probab.}, 30(3):1397--1465, 2002.

\bibitem{guatteri2005backward}
Giuseppina Guatteri and Gianmario Tessitore.
\newblock On the backward stochastic {R}iccati equation in infinite dimensions.
\newblock {\em SIAM J. Control Optim.}, 44(1):159--194 (electronic), 2005.

\bibitem{hu1990maximum}
Ying Hu and Shi~Ge Peng.
\newblock Maximum principle for semilinear stochastic evolution control
  systems.
\newblock {\em Stochastics Stochastics Rep.}, 33(3-4):159--180, 1990.

\bibitem{hu1991adapted}
Ying Hu and Shi~Ge Peng.
\newblock Adapted solution of a backward semilinear stochastic evolution
  equation.
\newblock {\em Stochastic Anal. Appl.}, 9(4):445--459, 1991.

\bibitem{hu1996maximum}
Ying Hu and Shi~Ge Peng.
\newblock Maximum principle for optimal control of stochastic system of
  functional type.
\newblock {\em Stochastic Anal. Appl.}, 14(3):283--301, 1996.

\bibitem{MengShenAMO2015}
Qingxin Meng and Yang Shen.
\newblock Optimal control for stochastic delay evolution equations.
\newblock {\em accepted on Appl. Math. Optim.}, (10.1007/s00245-015-9308-2).

\bibitem{mohammed1998stochastic}
Salah-Eldin~A. Mohammed.
\newblock Stochastic differential systems with memory: theory, examples and
  applications.
\newblock In {\em Stochastic analysis and related topics, {VI} ({G}eilo,
  1996)}, volume~42 of {\em Progr. Probab.}, pages 1--77. Birkh\"auser Boston,
  Boston, MA, 1998.

\bibitem{oksendal2011optimal}
Bernt {\O}ksendal, Agn{\`e}s Sulem, and Tusheng Zhang.
\newblock Optimal control of stochastic delay equations and time-advanced
  backward stochastic differential equations.
\newblock {\em Adv. in Appl. Probab.}, 43(2):572--596, 2011.

\bibitem{PengYang}
Shige Peng and Zhe Yang.
\newblock Anticipated backward stochastic differential equations.
\newblock {\em Ann. Probab.}, 37(3):877--902, 2009.

\bibitem{rudin1987real}
Walter Rudin.
\newblock {\em Real and complex analysis}.
\newblock McGraw-Hill Book Co., New York, third edition, 1987.

\bibitem{zhou2012existence}
Jianjun Zhou and Bin Liu.
\newblock The existence and uniqueness of the solution for nonlinear
  {K}olmogorov equations.
\newblock {\em J. Differential Equations}, 253(11):2873--2915, 2012.

\end{thebibliography}
\bibliographystyle{plain}

\end{document}